\documentclass[reqno]{amsart}
\usepackage{amsmath,amssymb,amsthm,graphicx,a4wide,enumerate}
\usepackage[small,bf]{caption}
\setlength{\captionmargin}{0pt}
\setlength{\textheight}{8.8in}
\setlength{\oddsidemargin}{0.5\paperwidth}
\addtolength{\oddsidemargin}{-0.5\textwidth}
\addtolength{\oddsidemargin}{-1in}
\setlength{\evensidemargin}{\oddsidemargin}
\setlength{\topmargin}{-.1in}
\setlength{\footskip}{.3in}
\theoremstyle{plain}
\newtheorem{theorem}{Theorem}
\newtheorem{lemma}[theorem]{Lemma}
\newtheorem{corollary}[theorem]{Corollary}
\theoremstyle{definition}
\newtheorem{definition}{Definition}
\theoremstyle{remark}
\newtheorem{remark}{Remark}
\newtheorem{example}{Example}
\newcommand{\prn}[1]{\left(#1\right)}
\newcommand{\abs}[1]{\left|#1\right|}
\newcommand{\norm}[1]{\left\|#1\right\|}
\newcommand{\pd}[2]{\frac{\partial#1}{\partial#2}}
\newcommand{\ud}[1]{\, \mathrm{d}#1}
\newcommand{\sfrac}[2]{{}^{#1}\!/_{#2}}
\newcommand{\phiIC}{\Phi}
\newcommand{\xfoot}{\vec{x}_{\text{foot}}}
\newcommand{\phiSP}{\psi}

\newcommand{\2}{\frac{1}{2}}

\begin{document}
\parskip.9ex

\title[Jet Schemes for Advection Problems]
{Jet Schemes for Advection Problems}
\author{Benjamin Seibold}
\address[Benjamin Seibold]
{Department of Mathematics \\ Temple University \\ \newline
1805 North Broad Street \\ Philadelphia, PA 19122}
\email{seibold@temple.edu}
\urladdr{http://www.math.temple.edu/\~{}seibold}
\author{Rodolfo R. Rosales}
\address[Rodolfo R. Rosales]
{Department of Mathematics \\ Massachusetts Institute of Technology \\
\newline 77 Massachusetts Avenue \\ Cambridge, MA 02139}
\email{rrr@math.mit.edu}
\author{Jean-Christophe Nave}
\address[Jean-Christophe Nave]
{Department of Mathematics and Statistics \\ McGill University \\
\newline 805 Sherbrooke W. \\ Montreal, QC, H3A 2K6, Canada}
\email{jcnave@math.mcgill.ca}
\urladdr{http://www.math.mcgill.ca/jcnave}
\subjclass[2000]{65M25; 65M12; 35L04}
\keywords{jet schemes, gradient-augmented, advection, cubic, quintic,
high-order, superconsistency}
\begin{abstract}
We present a systematic methodology to develop high order accurate numerical approaches for linear advection problems. These methods are based on evolving parts of the jet of the solution in time, and are thus called jet schemes. Through the tracking of characteristics and the use of suitable Hermite interpolations, high order is achieved in an optimally local fashion, i.e.\ the update for the data at any grid point uses information from a single grid cell only. We show that jet schemes can be interpreted as advect--and--project processes in function spaces, where the projection step minimizes a stability functional. Furthermore, this function space framework makes it possible to systematically inherit update rules for the higher derivatives from the ODE solver for the characteristics. Jet schemes of orders up to five are applied in numerical benchmark tests, and systematically compared with classical WENO finite difference schemes. It is observed that jet schemes tend to possess a higher accuracy than WENO schemes of the same order.
\end{abstract}

\maketitle

\section{Introduction} \label{sec:introduction}
%
In this paper we consider a class of approaches for linear advection
problems that evolve parts of the jet of the solution in time.
Therefore, we call them \emph{jet schemes}.\footnote{The term ``jet
   scheme'' exists in the fields of algebra and algebraic geometry,
   introduced by Nash \cite{Nash1995} and popularized by Kontsevich
   \cite{Kontsevich1995}, as a concept to understand singularities.
   Since we are here dealing with a computational scheme for a partial
   differential equation, we expect no possibility for confusion.}
As we will show, the idea of tracking derivatives in addition to function
values yields a systematic approach to devise high order accurate numerical
schemes that are very localized in space. The results presented are a
generalization of gradient-augmented schemes (introduced in
\cite{NaveRosalesSeibold2010}) to arbitrary order.

In this paper we specifically consider the linear advection equation
\begin{equation} \label{eq:linear_advection}
\phi_t+\vec{v}\cdot\nabla\phi = 0
\end{equation}
for $\phi\/$, with initial conditions $\phi(\vec{x},0)=\phiIC(\vec{x})\/$.
Furthermore, we assume that the (given) velocity field
$\vec{v}(\vec{x},t)\/$ is smooth. Equation~\eqref{eq:linear_advection}
alone is rarely the central interest of a computational
project. However, it frequently occurs as a part of a larger problem. One
such example is the movement of a front under a given velocity field (in
which case $\phi\/$ would be a level set function \cite{OsherSethian1988}).
Another example is that of advection-reaction, or advection-diffusion,
problems solved by fractional steps. Here, we focus solely on the advection
problem itself, without devoting much attention to the background in which
it may arise. However, we point out that generally one cannot, at any
time, simply track back the solution to the initial data. Instead, the
problem background generally enforces the necessity to advance the
approximate solution in small time increments $\Delta t\/$.

The accurate (i.e.~high-order) approximation of \eqref{eq:linear_advection}
on a fixed grid is a non-trivial task. Commonly used approaches can be put
in two classes. One class comprises finite difference and finite volume
schemes, which store function values or cell averages. These methods
achieve high order accuracy by considering neighborhood information, with
wide stencils in each coordinate direction. Examples are
ENO~\cite{ShuOsher1988} or WENO~\cite{LiuOsherChan1994} schemes with
strong stability preserving (SSP)
time stepping~\cite{GottliebShu1998,Shu1988,ShuOsher1988}
or flux limiter approaches \cite{VanLeer1973,VanLeer1979}. Due to their
wide stencils, achieving high order accurate approximations near
boundaries can be challenging with these methods. In addition, the
generalization of ENO/WENO methods to adaptive grids (quadtrees, octrees)
\cite{BergerOliger1984,MinGibou2007} is non-trivial. The other class of
approaches is comprised by semi-discrete methods, such as discontinuous
Galerkin (DG) \cite{CockburnShu1988,HesthavenWarburton2008,ReedHill1973}.
These achieve high order accuracy by approximating the flux through cell
boundaries based on high order polynomials in each grid cell. Generally,
DG methods are based on a weak formulation of \eqref{eq:linear_advection},
and the flow between neighboring cells is determined by a numerical flux
function. All integrals over cells and cell boundaries are
approximated by Gaussian quadrature rules, and the time stepping is done
by SSP Runge-Kutta schemes \cite{GottliebShu1998,Shu1988,ShuOsher1988}.
While the DG formalism can in principle yield any order of accuracy, its
implementation requires some care in the design of the data
structures (choice of polynomial basis, orientation of normal vectors,
etc.). Furthermore, SSP Runge-Kutta schemes of an order higher than 4
are quite difficult to design \cite{Gottlieb2005,GottliebKetchesonShu2009}.

The approaches (jet schemes) considered here fall into the class of
semi-Lagrangian approaches: they evolve the numerical solution on a fixed
Eulerian grid, with an update rule that is based on the method of
characteristics. Jet schemes share
some properties with the methods from both classes described
above. Like DG methods, they are based on a high order polynomial
approximation in each grid cell, and store derivative information in
addition to point values. However, instead of using a weak formulation,
numerical flux functions, Gaussian quadrature rules, and SSP schemes,
here characteristic curves are tracked --- which can be done with simple,
non-SSP, Runge-Kutta methods.
Furthermore, unlike semi-discrete methods, jet schemes construct
the solution at the next time step in a completely local fashion
(point by point). Each of the intermediate stages of a Runge-Kutta
scheme does not require the reconstruction of an approximate
intermediate representation for the solution in space. This
property is shared with Godunov-type finite volume methods. Fundamental
differences with finite volume methods are that jet schemes are not
conservative by design, and that higher derivative information is
stored, rather than reconstructed from cell averages. Finally, like many
other semi-Lagrangian approaches, jet schemes treat boundary conditions
naturally (the distinction between an ingoing and an outgoing
characteristic is built into the method), and they do not possess a
Courant-Friedrichs-Lewy (CFL)
condition that restricts stability. This latter property may be of
relevance if the advection equation~\eqref{eq:linear_advection} is one
step in a more complex problem that exhibits a separation of time scales.

For advection problems \eqref{eq:linear_advection}, jet schemes are
relatively simple and natural approaches that yield high order accuracy
with optimal locality: to update the data at a given grid point,
information from only a single grid cell is used
\cite{NaveRosalesSeibold2010}. In addition, the high order polynomial
approximation admits the representation of certain structures of subgrid
size --- see \cite{NaveRosalesSeibold2010} for more details.

Jet schemes are based on the idea of advect--and--project: one time step
in the solution of Equation~\eqref{eq:linear_advection} takes the form
\begin{equation} \label{intro:advect:and:project}
 \phi^{n+1} = P \circ A_{t+\Delta t\/,\,t}\,\phi^n\/,
\end{equation}
where: (i) $\phi^m\/$ denotes the solution at time $t_m\/$ --- with
$t_{n+1}=t_n+\Delta t\/$, (ii) $A_{t+\Delta t\/,\,t}\/$ is an approximate
advection operator, as obtained by evolving the solution along
characteristics using an appropriate ODE solver, and (iii) $P\/$ is a
projection operator, based on knowing a specified portion of the jet of
the solution at each grid point in a fixed cartesian grid. At the end of
the time step, the solution is represented by an appropriate cell based,
polynomial Hermite interpolant --- produced by the projection $P\/$. We
use polynomial Hermite interpolants because they have a very useful
stabilizing property: in each cell, the interpolant is a polynomial
minimizer for the $L^2\/$ norm of a certain high derivative of the
interpolated function. This controls the growth of the derivatives of
the solution (e.g.: oscillations), and thus ensures stability.

Equation~\eqref{intro:advect:and:project} is not quite a numerical scheme,
since advecting the complete function $\phi^n\/$ would require a continuum
of operations. However, it can be easily made into one. In order to be able
to apply $P\/$, all we need to know is the values of some partial derivatives
of $A_{t+\Delta t\/,\,t}\,\phi^n\/$ (the portion of the jet that $P\/$ uses)
at the grid points. These can be obtained from the ODE solver as
follows: consider the formula (provided by using the ODE solver along
characteristics) that gives the value of $A_{t+\Delta t\/,\,t}\,\phi^n\/$ at
any point (in particular, the grid points). Then take the appropriate
partial derivatives of this formula; this gives an update rule that can
be used to obtain the required data at the grid points.
This process provides an implementable scheme that is fully equivalent
to the continuum (functional) equation~\eqref{intro:advect:and:project}.
We call such a scheme \emph{superconsistent}, since it maintains
functional consistency between the function and its partial derivatives.

Finally, we point out that no special restrictions on the ODE solver
used are needed. This follows from the minimizing property of the
Hermite interpolants mentioned earlier, and superconsistency --- which
guarantees that the property is not lost, as the time update occurs
in the functional sense of Equation~\eqref{intro:advect:and:project}.
Thus, for example, regular Runge-Kutta schemes can be used with
superconsistent jet schemes --- unlike WENO schemes, which require
SSP ODE solvers to ensure total variation diminishing (TVD)
stability~\cite{GottliebShuTadmor2001}.

This paper is organized as follows.
The polynomial representation of the approximate solution is presented in
\S~\ref{sec:interp_and_project}. There we show how, given suitable parts
of the jet of a smooth function, a cell-based Hermite interpolant can be
used to obtain a high order accurate approximation. This interpolant gives
rise to a projection operator in function spaces, defined by evaluating
the jet of a function at grid points, and then constructing the piecewise
Hermite interpolant.
In \S~\ref{sec:Advect:Update:Time}, the jet schemes' advect--and--project
approach sketched above is described in detail. In particular, we show
that superconsistent jet schemes are equivalent to advancing the solution
in time using the functional equation~\eqref{intro:advect:and:project}.
This interpretation is then used to systematically inherit update rules
for the solution's derivatives, from the numerical scheme used for the
characteristics. Specific two-dimensional schemes of orders $1\/$,
$3\/$, and $5\/$ are constructed.
These are then investigated numerically in \S~\ref{sec:numerical_results}
for a benchmark test, and compared with classical WENO schemes of the
same orders.
Boundary conditions and stability are discussed in
\S~\ref{subsec:Boundary:Conditions} and
\S~\ref{subsec:adv:funspaces}, respectively.

\section{Interpolations and Projections} \label{sec:interp_and_project}
%
In this section we discuss the class of projections $P\/$ that are
used and required by jet schemes --- see
Equation~\eqref{intro:advect:and:project}.
We begin, in \S~\ref{subsec:generalized_hermite_interpolation}, by
presenting the cell-based generalized Hermite interpolation problem
of arbitrary order in any number of dimensions.
In \S~\ref{subsec:global_interpolant} we construct, on an arbitrary
cartesian grid, global interpolants --- using the cell-based generalized
Hermite interpolants of \S~\ref{subsec:generalized_hermite_interpolation}.
Then we introduce a stability functional,
which is the key to the stability of the superconsistent jet schemes.
Next, in \S~\ref{subsec:total:partial:jets}, two different types of
portions of the jet of a function (corresponding to different kinds of
projections) are defined. These are the total and the partial $k$-jets.
In \S~\ref{subsec:projections_function_spaces}
the global interpolants defined in \S~\ref{subsec:global_interpolant} are
used to construct (in appropriate function spaces) the projections which
are the main purpose of this section.
Finally, in \S~\ref{subsec:optimally:local} we discuss possible ways to
decrease the number of derivatives needed by the Hermite interpolants,
using cell based finite differences; the notion of an optimally local
projection is introduced there.

\subsection{Cell-Based Generalized Hermite Interpolation}
\label{subsec:generalized_hermite_interpolation}
%
We start with a review of the generalized Hermite interpolation problem
in one space dimension. Consider the unit interval $[0,1]\/$. For each
boundary point $q\in\{0,1\}\/$ let a vector of data
$\prn{\phi_0^q,\phi_1^q,\dots,\phi_k^q}\/$ be given, corresponding to all
the derivatives up to order $k\/$ of some sufficiently smooth function
$\phi=\phi(x)\/$ --- the zeroth order derivative is the function itself.
Namely
\begin{equation*}
\phi_{\alpha}^{q} = \prn{\tfrac{d}{d\/x}}^\alpha\phi(q)
\quad \forall \, q\in\{0,1\}\/,\;\alpha\in\{0,1,\dots,k\}\/.
\end{equation*}
In the class of polynomials of degree less than or equal to $n=2k+1\/$,
this equation can be used to define an interpolation problem with a unique
solution. This solution, the $n^{th}$ order Hermite interpolant, can be
written as a linear superposition
\begin{equation*}
\mathcal{H}_n(x) = \sum_{q\in\{0,1\}}\;\;\sum_{\alpha\in\{0,\dots,k\}}
\phi_{\alpha}^q\;w_{n\/,\,\alpha}^q(x)
\end{equation*}
of basis functions $w_{n\/,\,\alpha}^q$, each of which solves the
interpolation problem
\begin{equation*}
\prn{\tfrac{d}{d\/x}}^{\alpha'}\! w_{n\/,\,\alpha}^q(q') =
\delta_{\alpha\/,\,\alpha'}\,\delta_{q\/,\,q'} \quad
\forall\,q'\in\{0,1\}\/,\;\alpha'\in\{0\/,\,\dots\/,\,k\}\/,
\end{equation*}
where $\delta\/$ denotes Kronecker's delta. Hence, each of the
$2(k+1) = n+1\/$ basis polynomials equals $1\/$ on exactly one boundary
point and for exactly one derivative, and equals $0\/$ for any other
boundary point or derivative up to order $k\/$. Notice that
\begin{equation*}
w_{n\/,\,\alpha}^0(x) = (-1)^\alpha\,w_{n\/,\,\alpha}^1(1-x)\/.
\end{equation*}
%
\begin{example}
The three lowest order cases of generalized Hermite interpolants are given
by the following basis functions:
\begin{itemize}
 \item linear ($k=0\/$, $n=1\/$):
  \begin{align*}
   w_{1\/,\,0}^1(x) &= x\/,
  \end{align*}
 \item cubic ($k=1\/$, $n=3\/$):
  \begin{align*}
   w_{3\/,\,0}^1(x) &= 3x^2-2x^3\/, \\
   w_{3\/,\,1}^1(x) &= -x^2+x^3\/,
  \end{align*}
 \item quintic ($k=2\/$, $n=5\/$):
  \begin{align*}
   w_{5\/,\,0}^1(x) &= 10x^3-15x^4+6x^5\/, \\
   w_{5\/,\,1}^1(x) &= -4x^3+7x^4-3x^5\/, \\
   w_{5\/,\,2}^1(x) &= \tfrac{1}{2}x^3-x^4+\tfrac{1}{2}x^5\/.
  \end{align*}
\end{itemize}
\end{example}
%
One dimensional Hermite interpolation can be generalized naturally to
higher space dimensions by using a tensor product approach, as described
next.

In $\mathbb{R}^p\/$, consider a \emph{$p$-rectangle} (or simply ``cell'')
$[a_1\/,\,b_1]\times\dots\times [a_p\/,\,b_p]\/$. Let
$\Delta x_i = b_i-a_i\/$, $1 \leq i \leq p\/$, denote the edge lengths of
the $p\/$-rectangle, and call $h = \max_{i=1}^p\Delta x_i$ the
\emph{resolution}. In addition, we use the classical multi-index notation.
For vectors $\vec{x}\in\mathbb{R}^p\/$ and
$\vec{a} \in \mathbb{N}_0^p\/$, define:
(i)   $\abs{\vec{a}} = \sum_{i=1}^p a_i\/$,
(ii)  $\vec{x}^{\,\vec{a}} = \prod_{i=1}^p x_i^{a_i}\/$, and
(iii) $\partial^{\,\vec{a}} = \partial_1^{a_1}\,\dots\,\partial_p^{a_p}\/$,
where $\partial_i = \frac{\partial}{\partial x_i}\/$.
%
\begin{definition} \label{def:p-n_polynomial}
 A \emph{$p$-$n$ polynomial} is a $p$-variate polynomial of degree less
 than or equal to $n\/$ in each of the variables. Using multi-index
 notation, a $p$-$n$ polynomial can be written as
 \begin{equation*}
  \mathcal{H}_n(\vec{x}) = \sum_{\vec{\alpha}\in\{0\/,\,\dots\/,\,n\}^p}
  c_{\vec{\alpha}} \; \vec{x}^{\,\vec{\alpha}}\/,
 \end{equation*}
 with $(n+1)^p\/$ parameters $c_{\vec{\alpha}}\/$. Note that here
 we will consider only the case where $n\/$ is odd.
\end{definition}
%
\begin{example}
 Examples of $p$-$n$ polynomials are:
 \begin{itemize}
  \item[] \hspace*{7.0ex} \parbox{10ex}{\ $p=1\/$}
          \parbox{16ex}{$p=2\/$} \parbox{7ex}{$p=3\/$}
  \item $n=1\/$: \parbox{9ex}{\ linear,} \parbox{11ex}{bi-linear,}
                 \parbox{17ex}{and\ \  tri-linear} functions.
  \item $n=3\/$: \parbox{9ex}{\ cubic,} \parbox{11ex}{bi-cubic,}
                 \parbox{17ex}{and\ \  tri-cubic} functions.
  \item $n=5\/$: \parbox{9ex}{\ quintic,} \parbox{11ex}{bi-quintic,}
                 \parbox{17ex}{and\ \  tri-quintic} functions.
 \end{itemize}
\end{example}
%
Now let the $p$-rectangle's $2^p\/$ vertices be indexed by a vector
$\vec{q}\in\{0,1\}^p\/$, such that the vertex of index $\vec{q}\/$ is at
position $\vec{x}_{\vec{q}} =
(a_1+\Delta x_1\,q_1\/,\,\dots\/,\,a_p+\Delta x_p\,q_p)\/$.
%
\begin{definition} \label{def:data}
 For $n\/$ odd, and a sufficiently smooth function $\phi=\phi(\vec{x})\/$,
 the \emph{$n$-data} on the $p$-rectangle (defined on the vertices) is the
 set of $(n+1)^p$ scalars given by
 \begin{equation} \label{eq:data}
  \phi_{\vec{\alpha}}^{\vec{q}} = \partial^{\,\vec{\alpha}}\,
  \phi(\vec{x}_{\vec{q}})\/,
 \end{equation}
 where $\vec{q}\in\{0,1\}^p\/$ and $\vec{\alpha}\in\{0,\dots,k\}^p\/$,
 with $k=\frac{n-1}{2}\/$.
\end{definition}
%
\begin{lemma} \label{lem:p-n_polynomial_uniqueness}
 Two $p$-$n$ polynomials with the same $n$-data on some $p\/$-rectangle,
 must be equal.
\end{lemma}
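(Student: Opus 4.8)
The plan is to reduce to a homogeneous statement and then induct on the dimension $p\/$, exploiting the tensor product structure of the polynomial space and of the data. First I would note that, by linearity of $\vec{\alpha}\mapsto\partial^{\,\vec{\alpha}}\/$, the difference of two $p$-$n$ polynomials with identical $n$-data is a $p$-$n$ polynomial whose $n$-data vanishes identically; so it suffices to show that a $p$-$n$ polynomial $\mathcal{H}_n\/$ with $\phi_{\vec{\alpha}}^{\vec{q}}=0\/$ for all $\vec{q}\in\{0,1\}^p\/$ and all $\vec{\alpha}\in\{0,\dots,k\}^p\/$ must be identically zero.

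For the base case $p=1\/$ I would argue by divisibility: such an $\mathcal{H}_n\/$ is a univariate polynomial of degree at most $n=2k+1\/$ with a zero of order at least $k+1\/$ at each vertex $a_1\/$ and $b_1\/$, hence is divisible by $(x-a_1)^{k+1}(x-b_1)^{k+1}\/$, a polynomial of degree $2k+2=n+1\/$; the degree bound then forces $\mathcal{H}_n\equiv 0\/$. This is, of course, just the classical unisolvence of the one-dimensional generalized Hermite problem recalled at the start of this subsection, so it could equally well be cited rather than reproved.

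For the inductive step I would assume the claim in dimension $p-1\/$. Writing $\vec{x}'=(x_1,\dots,x_{p-1})\/$, $x_0:=a_p\/$, and $x_1:=b_p\/$, I would fix $\alpha_p\in\{0,\dots,k\}\/$ and $q_p\in\{0,1\}\/$ and consider the map $\vec{x}'\mapsto\partial_{x_p}^{\alpha_p}\mathcal{H}_n(\vec{x}',x_{q_p})\/$, which is a $(p-1)$-$n$ polynomial. At each vertex $\vec{x}'_{\vec{q}'}\/$ of the $(p-1)$-rectangle, its $\vec{\alpha}'$-th derivative equals $\partial^{(\vec{\alpha}',\alpha_p)}\mathcal{H}_n\/$ evaluated at the vertex $\vec{x}_{(\vec{q}',q_p)}\/$ of the $p$-rectangle, which is a member of the (vanishing) $n$-data of $\mathcal{H}_n\/$; hence this $(p-1)$-$n$ polynomial has vanishing $n$-data and, by the inductive hypothesis, is identically zero. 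This yields $\partial_{x_p}^{\alpha_p}\mathcal{H}_n(\vec{x}',x_{q_p})=0\/$ for every $\vec{x}'\/$ and every admissible $\alpha_p\/,q_p\/$. Finally, freezing an arbitrary $\vec{x}'\/$, the univariate polynomial $t\mapsto\mathcal{H}_n(\vec{x}',t)\/$ has degree at most $n\/$ and vanishes together with its first $k\/$ derivatives at both $a_p\/$ and $b_p\/$, so the $p=1\/$ case gives $\mathcal{H}_n(\vec{x}',\cdot)\equiv 0\/$; since $\vec{x}'\/$ was arbitrary, $\mathcal{H}_n\equiv 0\/$, completing the induction.

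The substantive content here is light — a dimension count alone already makes uniqueness and existence equivalent, since the $p$-$n$ polynomials form a space of dimension $(n+1)^p\/$, exactly matching the number of $n$-data values — and the only place I expect to need care is the bookkeeping in the inductive step: correctly identifying the $n$-data of the frozen, differentiated $(p-1)$-$n$ polynomial with a sub-collection of the $n$-data of $\mathcal{H}_n\/$, together with the sharp identity $2(k+1)=n+1\/$ that makes the one-dimensional base case work. An alternative, more abstract presentation would simply observe that the $n$-data functionals are the $p$-fold tensor product of the unisolvent one-dimensional Hermite functionals, hence form a basis of the dual space of the $p$-$n$ polynomials and in particular separate points.
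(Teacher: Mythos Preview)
Your proposal is correct and follows essentially the same approach as the paper: reduce to the homogeneous problem, induct on $p\/$, use one-dimensional Hermite unisolvence for the base case, and in the inductive step show that each $\partial_{x_p}^{\alpha_p}\mathcal{H}_n\/$ restricted to the top and bottom hyperfaces is a $(p-1)$-$n$ polynomial with vanishing data (hence zero), then apply the $p=1\/$ case along lines in the $x_p\/$ direction. Your write-up is somewhat more detailed in the bookkeeping and offers the divisibility argument and the tensor-product/dimension-count remarks as alternatives, but the core argument is the same as the paper's.
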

%
\begin{proof}
 Let $\phi\/$ be the difference between the two polynomials, which has
 zero data. We prove that $\phi\equiv 0\/$ by induction over $p\/$.
 For $p=1\/$, we have a standard 1-D Hermite interpolation problem, whose
 solution is known to be unique. Assume now that the result applies for
 $p-1\/$. In the $p$-rectangle, consider the functions $\phi\/$,
 $\partial_p\,\phi\/$, \dots, $\partial_p^k\,\phi\/$ --- both at the
 ``bottom'' hyperface ($x_p = a_p\/$, i.e.~$q_p = 0\/$) and the ``top''
 hyperface ($x_p = b_p\/$, i.e.~$q_p = 1\/$). For each of these functions,
 zero data is given at all of the corner vertices of the two hyperfaces.
 Therefore, by the induction assumption, all of these functions vanish
 everywhere on the top and bottom hyperfaces. Consider now the ``vertical''
 lines joining a point $(x_1\/,\,\dots\/,\,x_{p-1})\in [a_1\/,\,b_1]
 \times\,\dots\,\times [a_{p-1}\/,\,b_{p-1}]$ in the bottom hyperface with
 its corresponding one on the top hyperface. For each of these lines we
 can use the $p = 1\/$ uniqueness result to conclude that $\phi = 0\/$
 identically on the line. It follows that $\phi = 0\/$ everywhere.
\end{proof}
%
\begin{theorem} \label{thm:interpolant:exists}
 For any arbitrary $n$-data ($n\/$ odd) on some $p$-rectangle, there
 exists exactly one $p$-$n$ polynomial which interpolates the data.
\end{theorem}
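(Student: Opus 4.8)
The statement combines existence with the uniqueness already furnished by Lemma~\ref{lem:p-n_polynomial_uniqueness}, so only existence remains to be shown. The plan is to write down an explicit interpolant as a tensor product of the one-dimensional generalized Hermite basis functions $w_{n\/,\,\alpha}^q$. First I would rescale the cell to the unit box: set $\xi_i = (x_i-a_i)/\Delta x_i$ for $1\le i\le p$, so that the vertex $\vec{x}_{\vec{q}}$ corresponds to $\vec{\xi}=\vec{q}\in\{0,1\}^p$. With $k=\frac{n-1}{2}$ as in Definition~\ref{def:data}, I then propose
\[
 \mathcal{H}_n(\vec{x}) = \sum_{\vec{q}\in\{0,1\}^p}\ \sum_{\vec{\alpha}\in\{0,\dots,k\}^p}
 \prn{\prod_{i=1}^p \Delta x_i^{\alpha_i}}\,
 \phi_{\vec{\alpha}}^{\vec{q}}\ \prod_{i=1}^p w_{n\/,\,\alpha_i}^{q_i}(\xi_i)
\]
as the candidate $p$-$n$ polynomial interpolating the given $n$-data.

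Next I would verify the two required properties. That $\mathcal{H}_n$ is a $p$-$n$ polynomial follows because each factor $w_{n\/,\,\alpha_i}^{q_i}(\xi_i)$ is a polynomial of degree at most $n$ in $\xi_i$, hence of degree at most $n$ in $x_i$, and forming tensor products of such factors raises no per-variable degree; summing over $\vec{q}$ and $\vec{\alpha}$ preserves this. For the interpolation property, I would apply $\partial^{\,\vec{\alpha}'}$ for an arbitrary $\vec{\alpha}'\in\{0,\dots,k\}^p$ and evaluate at a vertex $\vec{x}_{\vec{q}'}$. Since every summand factorizes over the coordinates, so does the derivative; the chain rule contributes a factor $\prod_i \Delta x_i^{-\alpha_i'}$, and the one-dimensional identity $\prn{\tfrac{d}{d\/x}}^{\alpha'} w_{n\/,\,\alpha}^{q}(q') = \delta_{\alpha\/,\,\alpha'}\,\delta_{q\/,\,q'}$ collapses each coordinate sum. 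The scaling factors $\prod_i \Delta x_i^{\alpha_i}$ and $\prod_i \Delta x_i^{-\alpha_i'}$ cancel exactly on the single surviving term $\vec{\alpha}=\vec{\alpha}'$, $\vec{q}=\vec{q}'$, leaving $\partial^{\,\vec{\alpha}'}\mathcal{H}_n(\vec{x}_{\vec{q}'}) = \phi_{\vec{\alpha}'}^{\vec{q}'}$, as required. Uniqueness is then immediate from Lemma~\ref{lem:p-n_polynomial_uniqueness}.

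The only thing requiring genuine care here is the bookkeeping of the rescaling factors $\Delta x_i^{\alpha_i}$ under the chain rule; everything else is routine. A slicker, though non-constructive, alternative I would at least mention: by Definition~\ref{def:p-n_polynomial} the space of $p$-$n$ polynomials has dimension $(n+1)^p$, which equals the number $2^p(k+1)^p = (n+1)^p$ of entries in the $n$-data, so the linear map sending a $p$-$n$ polynomial to its $n$-data is a map between spaces of equal finite dimension, injective by Lemma~\ref{lem:p-n_polynomial_uniqueness}, hence bijective --- giving existence and uniqueness at once. I would nonetheless keep the explicit tensor-product formula as the main argument, since the basis functions $w_{n\/,\,\alpha_i}^{q_i}$ are used elsewhere in the paper.
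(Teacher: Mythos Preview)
Your proposal is correct and follows essentially the same approach as the paper: cite Lemma~\ref{lem:p-n_polynomial_uniqueness} for uniqueness, then exhibit the interpolant explicitly as a linear combination of tensor-product basis functions $\prod_i (\Delta x_i)^{\alpha_i}\,w_{n,\alpha_i}^{q_i}(\xi_i)$ with the rescaled coordinates $\xi_i$. You supply more verification detail (the chain-rule bookkeeping and the check that the result is a $p$-$n$ polynomial) than the paper, which simply states the Kronecker-delta property of the basis functions; your dimension-counting alternative is a nice bonus not in the paper.
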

%
\begin{proof}
 Lemma~\ref{lem:p-n_polynomial_uniqueness} shows that there exists at
 most one such polynomial. The interpolating $p$-$n$ polynomial is
 explicitly given by
 \begin{equation} \label{eq:p-n_interpolant_constructed}
  \mathcal{H}_n(\vec{x}) = \sum_{\vec{q}\in\{0,1\}^p}\;\;
  \sum_{\vec{\alpha}\in\{0\/,\,\dots\/,\,k\}^p}
  \phi_{\vec{\alpha}}^{\vec{q}}\;W_{n\/,\,\vec{\alpha}}^{\vec{q}}(\vec{x})\/,
 \end{equation}
 where the $W_{n\/,\,\vec{\alpha}}^{\vec{q}}(\vec{x})\/$ are
 $p$-$n$ polynomial basis functions that satisfy
 \begin{equation*}
  \partial^{\,\vec{\alpha}{\,}'} W_{n\/,\,\vec{\alpha}}^{\vec{q}}\,
  (\vec{x}_{\vec{q}{\;}'}) = \delta_{\vec{\alpha}\/,\,\vec{\alpha}{\,}'}\;
  \delta_{\vec{q}\/,\,\vec{q}{\;}'} \quad\forall\,
  \vec{q}{\;}'\in\{0,1\}^p\/,\;\vec{\alpha}{\,}'\in\{0,\dots,k\}^p\/.
 \end{equation*}
 They are given by the tensor products
 \begin{equation*}
  W_{n\/,\,\vec{\alpha}}^{\vec{q}}(\vec{x}) = \prod_{i=1}^p
  (\Delta x_i)^{\alpha_i}\;w_{n\/,\,\alpha_i}^{q_i}(\xi_i)\/,
 \end{equation*}
 where $\xi_i = \frac{x_i-a_i}{\Delta x_i}\/$ is the relative coordinate
 in the $p$-rectangle, and the $w_{n\/,\,\alpha}^{q}$ are the univariate
 basis functions defined earlier.
\end{proof}
%
Next we show that the $p$-$n$ polynomial interpolant given by
Equation~\eqref{eq:p-n_interpolant_constructed} is a $(n+1)^{st}\/$
order accurate approximation to any sufficiently smooth function
$\phi\/$ it interpolates on a $p$-rectangle. For convenience, we
consider a $p$-cube with $\Delta x_1 = \dots = \Delta x_p = h\/$.
In this case, the interpolant in
\eqref{eq:p-n_interpolant_constructed} becomes
\begin{equation} \label{eq:p-n_interpolant_h}
 \mathcal{H}_n(\vec{x}) = \sum_{\vec{q}\in\{0,1\}^p}\;\;
 \sum_{\vec{\alpha}\in\{0\/,\,\dots\/,\,k\}^p}
 \phi_{\vec{\alpha}}^{\vec{q}}\; h^{\abs{\vec{\alpha}}} \;
 \prod_{i=1}^p w_{n\/,\,\alpha_i}^{q_i}(\xi_i)\/.
\end{equation}
%
\begin{lemma} \label{lem:p-n_polynomial_accuracy_data}
 Let the data determining the $p$-$n$ polynomial Hermite interpolant be
 known only up to some error. Then Equation~\eqref{eq:p-n_interpolant_h}
 yields the interpolation error
 \begin{equation*}
  \delta\mathcal{H}_n(\vec{x}) = \sum_{\vec{q}\in\{0,1\}^p}\;\;
  \sum_{\vec{\alpha}\in\{0\/,\,\dots\/,\,k\}^p}
  \prn{\prod_{i=1}^p w_{n\/,\,\alpha_i}^{q_i}(\xi_i)}\,
  h^{\abs{\vec{\alpha}}}\; \delta\phi_{\vec{\alpha}}^{\vec{q}}\/,
 \end{equation*}
 where the notation $\delta u\/$ indicates the error in some quantity
 $u\/$. In particular, if the data $\phi_{\vec{\alpha}}^{\vec{q}}\/$ are
 known with $O\prn{h^{n+1-\abs{\vec{\alpha}}}}\/$ accuracy, then
 $\delta\prn{\partial^{\,\vec{\alpha}}\mathcal{H}_n} =
  O\prn{h^{n+1-\abs{\vec{\alpha}}}}\/$.
\end{lemma}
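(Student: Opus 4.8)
The plan is to exploit the fact that the interpolant in \eqref{eq:p-n_interpolant_h} is a \emph{linear} function of the data, and then to keep careful track of the powers of $h$ introduced by the change of variables $\xi_i=(x_i-a_i)/h$.

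First I would note that $\mathcal{H}_n$ depends linearly on the $(n+1)^p$ numbers $\phi_{\vec{\alpha}}^{\vec{q}}$: in \eqref{eq:p-n_interpolant_h} the basis polynomials $w_{n\/,\,\alpha}^{q}$ and the weights $h^{\abs{\vec{\alpha}}}$ do not depend on the data. Hence the interpolant built from the perturbed data $\phi_{\vec{\alpha}}^{\vec{q}}+\delta\phi_{\vec{\alpha}}^{\vec{q}}$ equals $\mathcal{H}_n+\delta\mathcal{H}_n$, where $\delta\mathcal{H}_n$ is obtained simply by substituting $\delta\phi_{\vec{\alpha}}^{\vec{q}}$ for $\phi_{\vec{\alpha}}^{\vec{q}}$ in \eqref{eq:p-n_interpolant_h}. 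This is exactly the first displayed formula of the lemma; nothing beyond linearity is needed.

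For the order estimate I would fix a multi-index $\vec{\alpha}$ (renaming the summation index to $\vec{\beta}$ to avoid a clash) and differentiate. Since differentiation commutes with the perturbation, $\delta\prn{\partial^{\,\vec{\alpha}}\mathcal{H}_n}=\partial^{\,\vec{\alpha}}\prn{\delta\mathcal{H}_n}$; and because $\partial_{x_i}=h^{-1}\,\tfrac{d}{d\xi_i}$, applying $\partial^{\,\vec{\alpha}}$ to the product $\prod_i w_{n\/,\,\beta_i}^{q_i}(\xi_i)$ pulls out a factor $h^{-\abs{\vec{\alpha}}}$:
\begin{equation*}
\partial^{\,\vec{\alpha}}\prn{\delta\mathcal{H}_n}(\vec{x})
= h^{-\abs{\vec{\alpha}}}\sum_{\vec{q}\in\{0,1\}^p}\;
\sum_{\vec{\beta}\in\{0,\dots,k\}^p}
\prn{\prod_{i=1}^p \bigl(w_{n\/,\,\beta_i}^{q_i}\bigr)^{(\alpha_i)}(\xi_i)}
h^{\abs{\vec{\beta}}}\,\delta\phi_{\vec{\beta}}^{\vec{q}}\/.
\end{equation*}
The univariate basis polynomials $w_{n\/,\,\beta}^{q}$ and all their derivatives are fixed (independent of $h$) and bounded for $\xi_i\in[0,1]$, and the double sum has a fixed number of terms (namely $2^p(k+1)^p=(n+1)^p$). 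Inserting the hypothesis $\delta\phi_{\vec{\beta}}^{\vec{q}}=O\prn{h^{n+1-\abs{\vec{\beta}}}}$, each term is $O\prn{h^{-\abs{\vec{\alpha}}}\cdot h^{\abs{\vec{\beta}}}\cdot h^{n+1-\abs{\vec{\beta}}}}=O\prn{h^{n+1-\abs{\vec{\alpha}}}}$, and summing finitely many terms preserves the bound, giving $\delta\prn{\partial^{\,\vec{\alpha}}\mathcal{H}_n}=O\prn{h^{n+1-\abs{\vec{\alpha}}}}$.

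I do not expect any genuine obstacle: once linearity is invoked the argument is pure bookkeeping. The one point that needs attention is the scaling — one must not drop the factor $h^{-\abs{\vec{\alpha}}}$ that arises from differentiating with respect to $x_i$ rather than $\xi_i$, since it is precisely the cancellation of the $h^{\abs{\vec{\beta}}}$ weights against the data error, together with this $h^{-\abs{\vec{\alpha}}}$, that produces the advertised order $h^{n+1-\abs{\vec{\alpha}}}$. It is also worth stating explicitly that the bounds on the $w_{n\/,\,\beta}^{q}$ are taken on the fixed unit $p$-cube in the $\xi$-coordinates, so that they are genuine $h$-independent constants.
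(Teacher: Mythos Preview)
Your proposal is correct and is precisely the argument the paper has in mind: the lemma is stated in the paper without proof, since the first claim is immediate from the linearity of \eqref{eq:p-n_interpolant_h} in the data, and the order estimate follows from the chain-rule bookkeeping you carry out. There is nothing to add.
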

%
\begin{theorem} \label{thm:p-n_polynomial_accuracy}
 Consider a sufficiently smooth function $\phi\/$, and let
 $\mathcal{H}_n(\vec{x})\/$ be the $p$-$n$ polynomial that interpolates
 the data given by $\phi\/$ on the vertices of a $p$-cube of size $h\/$.
 Then, everywhere inside the $p$-cube, one has
 \begin{equation} \label{eq:p-n_polynomial_error}
  \partial^{\,\vec{\alpha}}\mathcal{H}_n - \partial^{\,\vec{\alpha}}\phi =
  O\prn{h^{n+1-\abs{\vec{\alpha}}}}\/,
 \end{equation}
 where the constant in the error term is controlled by the $(n+1)^{st}\/$
 derivatives of $\phi\/$.
\end{theorem}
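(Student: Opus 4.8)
The plan is to reduce the statement to two facts already in hand: the interpolation map is linear in the data and, by the uniqueness part of Theorem~\ref{thm:interpolant:exists}, it reproduces every $p$-$n$ polynomial exactly; together with Lemma~\ref{lem:p-n_polynomial_accuracy_data}, which estimates the derivatives of an interpolant built from small data. Given these, the only extra ingredient needed is Taylor's theorem. Throughout I write $\mathcal{H}_n[\psi]$ for the $p$-$n$ polynomial interpolating the $n$-data of a smooth function $\psi$, so that the interpolant in the statement is $\mathcal{H}_n = \mathcal{H}_n[\phi]$; linearity means $\mathcal{H}_n[\psi_1+\psi_2] = \mathcal{H}_n[\psi_1]+\mathcal{H}_n[\psi_2]$, and reproduction means $\mathcal{H}_n[\mathcal{P}] = \mathcal{P}$ whenever $\mathcal{P}$ is a $p$-$n$ polynomial.

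First I would fix a vertex $\vec{x}_0$ of the $p$-cube and let $T$ be the Taylor polynomial of $\phi$ about $\vec{x}_0$ of total degree $n$. Since $T$ has degree at most $n$ in each variable separately, it is a $p$-$n$ polynomial (Definition~\ref{def:p-n_polynomial}), hence $\mathcal{H}_n[T] = T$. Setting $\delta\phi := \phi - T$ and using linearity,
\begin{equation*}
 \partial^{\,\vec{\alpha}}\mathcal{H}_n - \partial^{\,\vec{\alpha}}\phi \;=\; \partial^{\,\vec{\alpha}}\mathcal{H}_n[\delta\phi] \;-\; \partial^{\,\vec{\alpha}}\delta\phi .
\end{equation*}
The second term on the right is handled directly by Taylor's theorem: for $\abs{\vec{\alpha}}\le n$ the polynomial $\partial^{\,\vec{\alpha}}T$ is exactly the degree-$(n-\abs{\vec{\alpha}})$ Taylor polynomial of $\partial^{\,\vec{\alpha}}\phi$ about $\vec{x}_0$, so $\partial^{\,\vec{\alpha}}\delta\phi = O\prn{h^{n+1-\abs{\vec{\alpha}}}}$ on the cube, with the implied constant bounded by the supremum over the cube of the $(n+1)$-st derivatives of $\phi$; for $\abs{\vec{\alpha}}>n$ the exponent is non-positive, so the estimate holds trivially.

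Next I would bound the first term on the right using Lemma~\ref{lem:p-n_polynomial_accuracy_data}. By linearity $\mathcal{H}_n[\delta\phi]$ is precisely the Hermite interpolant built from the $n$-data of $\delta\phi$, i.e.\ from the numbers $\partial^{\,\vec{\beta}}\delta\phi(\vec{x}_{\vec{q}})$ with $\vec{\beta}\in\{0,\dots,k\}^p$. By the previous paragraph these are $O\prn{h^{n+1-\abs{\vec{\beta}}}}$, so Lemma~\ref{lem:p-n_polynomial_accuracy_data} gives $\partial^{\,\vec{\alpha}}\mathcal{H}_n[\delta\phi] = O\prn{h^{n+1-\abs{\vec{\alpha}}}}$, uniformly over the cube because the universal basis functions $w_{n,\alpha}^{q}$ and all their derivatives are bounded on $[0,1]$. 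Adding this to the bound on $\partial^{\,\vec{\alpha}}\delta\phi$ yields \eqref{eq:p-n_polynomial_error}, with the constant controlled by the $(n+1)$-st derivatives of $\phi$.

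The step I expect to be the main (if mild) obstacle is the multi-index bookkeeping that makes the powers of $h$ come out as claimed and keeps the constants attributable to derivatives of order $n+1$ alone. For $p\le 2$ one always has $\abs{\vec{\beta}}\le 2k<n$ for the entries of the $n$-data, and the above is just the plain Taylor remainder estimate. For $p\ge 3$, however, some multi-indices $\vec{\beta}\in\{0,\dots,k\}^p$ have $\abs{\vec{\beta}}>n$, so $\partial^{\,\vec{\beta}}T$ vanishes and the Taylor comparison is vacuous; there one only needs the crude bound $\partial^{\,\vec{\beta}}\delta\phi = O(1) = O\prn{h^{n+1-\abs{\vec{\beta}}}}$ (the exponent being $\le 0$), and if one wants every remainder governed strictly by derivatives of order $n+1$ one may instead take $T$ to be the tensor-product Taylor polynomial of coordinate degree $n$. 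Apart from this bookkeeping, the proof is the standard ``reproduce polynomials exactly, then Taylor-expand the remainder'' argument, with Lemma~\ref{lem:p-n_polynomial_accuracy_data} supplying the stability of $\mathcal{H}_n$ under perturbation of the data.
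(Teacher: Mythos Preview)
Your proposal is correct and follows essentially the same route as the paper's proof: introduce the degree-$n$ Taylor polynomial (the paper calls it $\mathcal{G}$, centered at an arbitrary point of the cube rather than a vertex), use uniqueness (Lemma~\ref{lem:p-n_polynomial_uniqueness}) to conclude that the interpolant of this Taylor polynomial is itself, and then combine the Taylor remainder estimate with Lemma~\ref{lem:p-n_polynomial_accuracy_data}. Your decomposition $\partial^{\vec{\alpha}}\mathcal{H}_n-\partial^{\vec{\alpha}}\phi=\partial^{\vec{\alpha}}\mathcal{H}_n[\delta\phi]-\partial^{\vec{\alpha}}\delta\phi$ is exactly the paper's (i)+(ii)+(iii) rewritten, and your extra bookkeeping for the data indices with $|\vec{\beta}|>n$ when $p\ge 3$ is a careful point the paper leaves implicit.
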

%
\begin{proof}
 Let $\mathcal{G}\/$ be the degree $n\/$ polynomial Taylor approximation
 to $\phi\/$, centered at some point inside the $p$-cube. Then, by
 construction:
 (i) $\partial^{\,\vec{\alpha}}\mathcal{G} - \partial^{\,\vec{\alpha}}\phi
      = O\prn{h^{n+1-\abs{\vec{\alpha}}}}\/$.
 In particular, the data for $\mathcal{G}$ on the $p$-cube is related to
 the data for $\mathcal{H}_n$ (same as the data for $\phi$) in the manner
 specified in Lemma~\ref{lem:p-n_polynomial_accuracy_data}. Thus:
 (ii) $\partial^{\,\vec{\alpha}}\mathcal{H}_G - \partial^{\,\vec{\alpha}}
      \mathcal{H}_n = O\prn{h^{n+1-\abs{\vec{\alpha}}}}\/$,
 where $\mathcal{H}_G\/$ is the $p$-$n$ polynomial that interpolates the
 data given by $\mathcal{G}\/$ on the vertices of the $p$-cube. However,
 from Lemma~\ref{lem:p-n_polynomial_uniqueness}:
 (iii) $\mathcal{H}_G = \mathcal{G}\/$, since $\mathcal{G}\/$ is a
 $p$-$n$ polynomial. From (i), (ii), and (iii)
 Equation~\eqref{eq:p-n_polynomial_error} follows.
\end{proof}
%
In conclusion, we have shown that: the $p$-$n$ polynomial Hermite
interpolant approximates smooth functions with $(n+1)^{st}$ order accuracy,
and each level of differentiation lowers the order of accuracy by one
level. Further: in order to achieve the full order accuracy, the data
$\phi_{\vec{\alpha}}^{\vec{q}}\/$ must be known with accuracy
$O\prn{h^{n+1-\abs{\vec{\alpha}}}}\/$.

\subsection{Global Interpolant} \label{subsec:global_interpolant}
%
Consider a rectangular computational domain $\Omega\subset \mathbb{R}^p\/$
in $p\/$ spacial dimensions, equipped with a regular rectangular grid.
Assume that at each grid point $\vec{x}_{\vec{m}}\/$, labeled by
$\vec{m}\in\mathbb{Z}^p\/$, a vector of data
$\phi_{\vec{\alpha}}^{\vec{m}}\/$ is given, where
$\vec{\alpha}\in\{0\/,\,\dots\/,\,k\}^p\/$ --- for some
$k\in\mathbb{N}_0\/$. Given this grid data, we define a global interpolant
$\mathcal{H}:\Omega\to\mathbb{R}\/$, which is a piece-wise $p$-$n$
polynomial (with $n=2k+1\/$). On each grid cell $\mathcal{H}\/$ is given
by the $p$-$n$ polynomial obtained using
Equation~\eqref{eq:p-n_interpolant_constructed},
with $\vec{q}\/$ related to the
grid index $\vec{m}\/$ by: $\vec{q}=\vec{m}-\vec{m}_0\/$ --- where
$\vec{m}_0\/$ is the cell vertex with the lowest values for each component
of $\vec{m}\/$. Note that $\mathcal{H}\/$ is $C^\infty\/$ inside each grid
cell, and $C^k\/$ across cell boundaries. However, in general,
$\mathcal{H}\/$ is not $C^{k+1}\/$.
%
\begin{remark} \label{rem:Ck:only}
 The smoothness of $\mathcal{H}\/$ is biased in the coordinate directions.
 All the derivatives that appear in the data vectors,
 i.e.~$\partial^{\,\vec{\alpha}}\mathcal{H}\/$ for
 $\vec{\alpha}\in\{0\/,\,\dots\/,\,k\}^p\/$, are defined everywhere in
 $\Omega\/$. Furthermore, at the grid points,
 $\partial^{\,\vec{\alpha}}\mathcal{H}(\vec{x}_{\vec{m}}) =
 \phi_{\vec{\alpha}}^{\vec{m}}\/$. In particular, all the partial derivatives
 up to order $k\/$ are defined, and continuous. However, not all the
 partial derivatives of orders larger than $k\/$ are defined. In general
 $\,\partial^{\vec{\alpha}}\,\mathcal{H}\/$, with
 $\vec{\alpha}\in\{0\/,\,\dots\/,\,k+1\}^p\/$, is piece-wise smooth ---
 with simple jump discontinuities across the grid hyperplanes which are
 perpendicular to any coordinate direction $x_{\ell}\/$ such that
 $\alpha_{\ell} = k+1\/$. Derivatives
 $\,\partial^{\vec{\alpha}}\,\mathcal{H}\/$, where $\alpha_{\ell} > k+1\/$
 for some $1 \leq \ell \leq p\/$,
 generally exist only in the sense of distributions.
\end{remark}
%
\begin{definition} \label{def:H_phi}
 Any (sufficiently smooth) function $\phi:\Omega\to\mathbb{R}\/$ defines
 a global interpolant $\mathcal{H}_{\phi}$ as follows: at each grid point
 $\vec{x}_{\vec{m}}\/$, evaluate the derivatives of $\phi\/$, as by
 Definition~\ref{def:data}, to produce a data vector. Namely:
 $\phi_{\vec{\alpha}}^{\vec{m}} = \partial^{\,\vec{\alpha}}
 \phi(\vec{x}_{\vec{m}})\ \forall\vec{\alpha}\in\{0\/,\,\dots\/,\,k\}^p\/$.
 Then, use these values as data to define $\mathcal{H}_{\phi}\/$
 everywhere.
\end{definition}
%
\begin{definition} \label{def:stab:fun}
 For any sufficiently smooth function $\phi\/$, define the
 \emph{stability functional} by
 \begin{equation} \label{eq:functional}
  \mathcal{F}[\phi] =
  \int_\Omega \prn{\partial^{\,\vec{\beta}}\phi(\vec{x})}^2\ud{\vec{x}}\/,
 \end{equation}
 where $\vec{\beta} = \vec{\beta}(k\/,\,p)\/$ is the $p$-vector
 $\vec{\beta} = (k+1,\dots,k+1)\/$. Of course, $\mathcal{F}\/$ also
 depends on $k\in\mathbb{N}_0\/$ and $\Omega \subset \mathbb{R}^p\/$,
 but (to simplify the notation) we do not display these dependencies.
\end{definition}
%
\begin{theorem} \label{thm:minimize:seminorm}
 Replacing a sufficiently smooth function $\phi\/$ by the interpolant
 $\mathcal{H}_{\phi}\/$ does not increase the stability functional:
 $\mathcal{F}[\mathcal{H}_{\phi}] \le \mathcal{F}[\phi]\/$. In fact:
 $\mathcal{H}_{\phi}\/$ minimizes $\mathcal{F}\/$, subject to the
 constraints given by the data
 $\partial^{\,\vec{\alpha}}\,\phi(\vec{x}_{\vec{m}})\/$, and the requirement
 that the minimizer should be smooth in each grid cell.
\end{theorem}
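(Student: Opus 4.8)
The plan is to exploit the fact that the stability functional $\mathcal{F}$ is a sum of independent contributions, one per grid cell, and that within each cell the integrand $(\partial^{\,\vec\beta}\phi)^2$ with $\vec\beta=(k{+}1,\dots,k{+}1)$ involves only the single monomial coefficient $c_{\vec\beta}$ of a $p$-$n$ polynomial (since $n=2k{+}1$, so $\beta_i=k{+}1\le n$ and no higher term survives differentiation $\partial_i^{k+1}$, while terms with any $\alpha_i\le k$ are killed). The key structural observation is that the cell-based $n$-data $\{\phi_{\vec\alpha}^{\vec q}:\vec\alpha\in\{0,\dots,k\}^p\}$ together with the single extra coefficient $c_{\vec\beta}$ would overdetermine a $p$-$n$ polynomial, but the $n$-data alone determines everything \emph{except} $\partial^{\,\vec\beta}$-type freedom — more precisely, the $p$-$n$ polynomial interpolating given $n$-data is unique (Lemma~\ref{lem:p-n_polynomial_uniqueness}), so on each cell the Hermite interpolant $\mathcal{H}_\phi$ is the \emph{only} $p$-$n$ polynomial matching $\phi$'s $n$-data there. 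So the real content is a variational orthogonality statement, not a comparison within the polynomial class.

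First I would reduce to a single cell $C$: since $\mathcal{F}[\phi]=\sum_C \int_C (\partial^{\,\vec\beta}\phi)^2$ and the constraints (matching $\partial^{\,\vec\alpha}\phi$ at the cell's own vertices, $\vec\alpha\in\{0,\dots,k\}^p$) decouple across cells, it suffices to show that among all functions $g$ smooth on $C$ with prescribed $n$-data on the $2^p$ vertices of $C$, the minimizer of $\int_C (\partial^{\,\vec\beta}g)^2$ is the $p$-$n$ polynomial interpolant $\mathcal{H}$. Write $g=\mathcal{H}+\eta$ where $\eta$ is smooth on $C$ with \emph{zero} $n$-data on all vertices of $C$. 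Then
\begin{equation*}
 \int_C (\partial^{\,\vec\beta}g)^2 = \int_C (\partial^{\,\vec\beta}\mathcal{H})^2 + 2\int_C (\partial^{\,\vec\beta}\mathcal{H})(\partial^{\,\vec\beta}\eta) + \int_C (\partial^{\,\vec\beta}\eta)^2 ,
\end{equation*}
and the last term is $\ge 0$, so it remains to show the cross term vanishes. Since $\mathcal{H}$ is a $p$-$n$ polynomial and $\vec\beta=(k{+}1,\dots,k{+}1)$ with $n=2k{+}1$, the derivative $\partial^{\,\vec\beta}\mathcal{H}$ is a constant (the only surviving term is $c_{\vec\beta}\,\vec{x}^{\,\vec\beta}$ up to the product of the coordinate contributions, each of which is degree exactly $k{+}1$ in $x_i$ to have a nonzero $\partial_i^{k+1}$, and at most $n$ forces it to be exactly $k{+}1$). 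Thus $\int_C (\partial^{\,\vec\beta}\mathcal{H})(\partial^{\,\vec\beta}\eta) = c_{\vec\beta}\int_C \partial^{\,\vec\beta}\eta\ud{\vec{x}}$, and I would show this integral is zero by integrating by parts $k{+}1$ times in each coordinate: each integration in $x_i$ reduces $\partial_i^{k+1}$ to $\partial_i^{k}$ evaluated at the two faces $x_i=a_i$ and $x_i=b_i$; after doing all $p$ directions one is left with a combination of $\partial^{\,\vec\gamma}\eta$ with $\vec\gamma\in\{0,\dots,k\}^p$ evaluated at the $2^p$ vertices — all of which vanish by the zero $n$-data of $\eta$. (Care is needed to track the boundary terms: each integration by parts in direction $i$ leaves two face integrals with one fewer derivative in $x_i$; iterating shows the only terms that survive to vertices have all $p$ orders $\le k$, hence are zero.)

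The main obstacle I expect is the bookkeeping in this iterated integration by parts: one must verify that every intermediate boundary term either telescopes to a vertex evaluation of a $\le k$-order derivative of $\eta$ (hence zero) or still carries an interior integral that is handled by a subsequent integration by parts in another direction — and that no term with some $\gamma_i=k{+}1$ ever reaches a pure vertex evaluation. A clean way to organize this is to note $\partial^{\,\vec\beta}\eta = \partial_1^{k+1}\cdots\partial_p^{k+1}\eta$ and apply the fundamental theorem of calculus successively: $\int_{a_p}^{b_p}\partial_p^{k+1}(\cdots)\ud{x_p} = [\partial_p^{k}(\cdots)]_{x_p=a_p}^{x_p=b_p}$, then recurse on the remaining $p{-}1$ directions for each of the two resulting $(p{-}1)$-dimensional integrands; unrolling the recursion yields a signed sum over $\vec q\in\{0,1\}^p$ of $\partial^{\,(k,\dots,k)}\eta(\vec{x}_{\vec q})$, each a piece of the zero $n$-data. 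This establishes $\mathcal{F}[\mathcal{H}_\phi]\le\mathcal{F}[\phi]$ with equality iff $\partial^{\,\vec\beta}\eta\equiv 0$ on every cell, and since $\mathcal{H}_\phi$ itself is an admissible competitor (smooth in each cell, correct data), it is a minimizer — proving both assertions of the theorem.
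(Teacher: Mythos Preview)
Your overall architecture---reduce to a single cell, write $g=\mathcal{H}+\eta$ with $\eta$ having zero $n$-data, expand the square, and show the cross term vanishes---matches the paper exactly. The gap is in your claim that $\partial^{\,\vec\beta}\mathcal{H}$ is a constant. Recall $n=2k+1$, so a $p$-$n$ polynomial has degree up to $2k+1$ in each variable; after applying $\partial_i^{k+1}$ the degree in $x_i$ drops to at most $k$, not to $0$. Thus $\partial^{\,\vec\beta}\mathcal{H}$ is a $p$-$k$ polynomial, which is genuinely nonconstant once $k\ge 1$ (already in one dimension with $k=1$: $\mathcal{H}$ is cubic and $\mathcal{H}''$ is linear). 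Your parenthetical ``at most $n$ forces it to be exactly $k+1$'' is where the slip occurs---it would be true if $n$ were $k+1$, but it is $2k+1$.

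Consequently, your FTC computation showing $\int_C\partial^{\,\vec\beta}\eta=0$ is correct but insufficient: you need $\int_C(\partial^{\,\vec\beta}\mathcal{H})(\partial^{\,\vec\beta}\eta)=0$ with a nonconstant first factor. The paper handles this (Lemma~\ref{lem:minimize:seminorm}) by integrating by parts $k+1$ times in the $x_p$-direction, shifting all $k+1$ $x_p$-derivatives from $\eta$ onto $\mathcal{H}$; the bulk term then contains $\partial_p^{2k+2}\mathcal{H}=0$. Each of the $k+1$ boundary contributions is an integral over a $(p-1)$-face of the form $\int(\partial^{\,\vec\beta'}\partial_p^{k+1+j}\mathcal{H})(\partial^{\,\vec\beta'}\partial_p^{k-j}\eta)$, where $\partial_p^{k+1+j}\mathcal{H}$ restricted to the face is a $(p-1)$-$n$ polynomial and $\partial_p^{k-j}\eta$ has zero $(p-1)$-dimensional $n$-data there; these vanish by induction on $p$. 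Your single-FTC-per-direction argument recovers only the case $k=0$.
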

%
\begin{remark} \label{rem:minimize:not:unique}
The minimizer is not unique if $p > 1\/$. To see this, let $f_j(x_j), \ 1 \leq j \leq p\/$ be nonzero smooth functions, with $f_j\/$ and all its derivatives vanishing at the the grid points. Then $\psi = \mathcal{H}_{\phi} + \sum_{j=1}^p f_j(x_j) \neq \mathcal{H}_{\phi}\/$ has the same data as $\mathcal{H}_{\phi}\/$ and $\phi\/$, and $\mathcal{F}[\psi] = \mathcal{F}[\mathcal{H}_{\phi}]\/$ if $p > 1\/$.
\end{remark}
%
\begin{proof}
 Note that $\partial^{\,\vec{\beta}}\mathcal{H}_{\phi}\/$ exists and it is
 piece-wise smooth (it may have simple discontinuities across cell
 boundaries --- see Remark~\ref{rem:Ck:only}). Hence
 $\mathcal{F}[\mathcal{H}_{\phi}]\/$ is defined. Clearly, it is
 enough to show that $\mathcal{H}_{\phi}\/$ minimizes $\mathcal{F}\/$ in
 each grid cell $Q\/$. Define $\varphi = \phi-\mathcal{H}_{\phi}\/$. Since
 $\phi\/$ and $\mathcal{H}_{\phi}\/$ have the same data at the vertices of
 $Q\/$, $\varphi\/$ has zero data at the vertices. Thus
 \begin{equation} \label{eq:fun:proof}
  I_p = \int_Q \prn{\partial^{\,\vec{\beta}}\mathcal{H}_{\phi}(\vec{x})}
  \prn{\partial^{\,\vec{\beta}}\varphi(\vec{x})}\ud{\vec{x}} = 0\/,
 \end{equation}
 as shown in Lemma~\ref{lem:minimize:seminorm}. From this equality it
 follows that $\mathcal{F}[\phi] = \mathcal{F}[\mathcal{H}_{\phi}] +
 \mathcal{F}[\varphi]\/$. Now, since $\mathcal{F}[\varphi] \geq 0\/$,
 $\mathcal{F}[\mathcal{H}_{\phi}] \le \mathcal{F}[\phi]\/$.
 For any other $\phi^*\/$ satisfying the theorem statement's constraints
 for the minimizing class, $\mathcal{H}_{\phi} = \mathcal{H}_{\phi^*}\/$.
 Hence $\mathcal{F}[\mathcal{H}_{\phi}] =
 \mathcal{F}[\mathcal{H}_{\phi^*}] \leq \mathcal{F}[\phi^*]$.
\end{proof}
%
\begin{corollary} \label{cor:orthogonal:proj}
 $\phi \to \mathcal{H}_\phi\/$ is an orthogonal projection with respect
 to the positive semi-definite quadratic form associated with
 $\mathcal{F}\/$ --- namely: the one used by Equation~\eqref{eq:fun:proof}.
\end{corollary}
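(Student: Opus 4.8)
The plan is to verify directly the three properties that characterize an orthogonal projection relative to a symmetric positive semi-definite bilinear form. Write $P\phi = \mathcal{H}_\phi$, and denote by $B[\phi,\psi] = \int_\Omega \prn{\partial^{\,\vec{\beta}}\phi}\prn{\partial^{\,\vec{\beta}}\psi}\ud{\vec{x}}$ the polarization of $\mathcal{F}$, interpreted (as in the proof of Theorem~\ref{thm:minimize:seminorm}) cell by cell, so that the possibly discontinuous highest derivatives cause no trouble. Then $B$ is bilinear, symmetric, and $B[\phi,\phi] = \mathcal{F}[\phi] \ge 0$, i.e.\ positive semi-definite; it is exactly the form used in Equation~\eqref{eq:fun:proof}.

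First I would check that $P$ is linear: by Definition~\ref{def:H_phi} the grid data $\phi_{\vec{\alpha}}^{\vec{m}} = \partial^{\,\vec{\alpha}}\phi(\vec{x}_{\vec{m}})$ depend linearly on $\phi\/$, and, by Equation~\eqref{eq:p-n_interpolant_constructed}, on each cell $\mathcal{H}_\phi$ is a fixed linear combination of basis polynomials with those data as coefficients; hence $\mathcal{H}_{\lambda\phi+\mu\psi} = \lambda\mathcal{H}_\phi + \mu\mathcal{H}_\psi\/$. Next, idempotency: by Remark~\ref{rem:Ck:only} one has $\partial^{\,\vec{\alpha}}\mathcal{H}_\phi(\vec{x}_{\vec{m}}) = \phi_{\vec{\alpha}}^{\vec{m}}$ for all $\vec{\alpha}\in\{0,\dots,k\}^p\/$, so $\mathcal{H}_\phi$ produces the same grid data as $\phi\/$; since on each cell $\mathcal{H}_\phi$ is itself a $p$-$n$ polynomial, Lemma~\ref{lem:p-n_polynomial_uniqueness} gives $\mathcal{H}_{\mathcal{H}_\phi} = \mathcal{H}_\phi$ on that cell, whence $P^2 = P\/$.

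The core step is the $B$-orthogonality of the range and the kernel of $P\/$. For arbitrary $\phi,\psi\/$, the function $\varphi := \psi - P\psi$ has vanishing data at every grid point, hence zero data at the vertices of every grid cell $Q\/$. Applying Equation~\eqref{eq:fun:proof} (that is, Lemma~\ref{lem:minimize:seminorm}) with $\mathcal{H}_\phi$ as the first factor and this $\varphi$ as the second, and summing over all cells, yields $B[P\phi,\,\psi-P\psi] = 0\/$. Since $P(I-P) = P-P^2 = 0\/$, the range of $I-P$ is exactly $\ker P\/$, so the identity just established says that $\mathrm{range}(P)$ is $B$-orthogonal to $\ker(P)\/$; equivalently, using the symmetry of $B$ and applying the identity twice, $B[P\phi,\psi] = B[P\phi,P\psi] = B[\phi,P\psi]\/$, i.e.\ $P$ is self-adjoint with respect to $B\/$. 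A linear idempotent operator that is self-adjoint for a symmetric positive semi-definite form is, by definition, an orthogonal projection for that form, which is the claim.

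I expect the only genuine subtlety to be bookkeeping rather than substance: one must keep $B$ evaluated cell-wise so that the jump discontinuities of $\partial^{\,\vec{\beta}}\mathcal{H}$ across grid hyperplanes (Remark~\ref{rem:Ck:only}) are harmless, and one must read ``orthogonal projection'' for a merely semi-definite form as ``linear, idempotent, self-adjoint, onto its range along its kernel.'' The non-uniqueness of the $\mathcal{F}$-minimizer noted in Remark~\ref{rem:minimize:not:unique} is consistent with this and does not affect the single-valuedness of $P\/$, which is pinned down by the grid data, not by minimization.
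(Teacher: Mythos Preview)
Your proposal is correct and follows essentially the same route as the paper: the paper gives no separate proof for the corollary, treating it as an immediate consequence of Equation~\eqref{eq:fun:proof} (equivalently Lemma~\ref{lem:minimize:seminorm}), which is precisely the $B$-orthogonality $B[P\phi,\psi-P\psi]=0$ you isolate as the core step. Your write-up simply makes explicit the routine checks (linearity, idempotency, cell-wise reading of $B$) that the paper leaves implicit; the slight generalization of using $\mathcal{H}_\phi$ in the first slot and $\psi-\mathcal{H}_\psi$ in the second is exactly what the proof of Lemma~\ref{lem:minimize:seminorm} supports, since that argument only needs the first factor to be a $p$-$n$ polynomial on the cell and the second to have zero vertex data.
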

%
The reason for the name ``stability functional'', and the relevance of
Theorem~\ref{thm:minimize:seminorm}, will become clear later in
\S~\ref{subsec:adv:funspaces}, where the inequality
$\mathcal{F}[\mathcal{H}_{\phi}] \le \mathcal{F}[\phi]\/$ is shown to
play a crucial role in ensuring the stability of superconsistent jet
schemes. Theorem~\ref{thm:minimize:seminorm} states that the Hermite
interpolant is within the class of the least oscillatory functions that
matches the data --- where ``oscillatory'' is measured by the functional
$\mathcal{F}\/$.
\begin{lemma} \label{lem:minimize:seminorm}
 The equality in \eqref{eq:fun:proof} applies.
\end{lemma}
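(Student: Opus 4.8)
The plan is to reduce the identity \eqref{eq:fun:proof} to the vanishing of the $n$-data of $\varphi = \phi-\mathcal{H}_{\phi}$ at the vertices of $Q$, by integrating by parts repeatedly, one coordinate direction at a time. Two structural facts drive the argument: (a) on the single grid cell $Q$ the global interpolant $\mathcal{H}_{\phi}$ coincides with a $p$-$n$ polynomial (by the construction of the global interpolant in \S\ref{subsec:global_interpolant} together with Theorem~\ref{thm:interpolant:exists}), hence, since $n=2k+1$ and a $p$-$n$ polynomial has degree $\le n$ in each variable, $\partial_i^{2k+2}\mathcal{H}_{\phi}\equiv 0$ on $Q$ for every coordinate direction $i$; and (b) since $\phi$ and $\mathcal{H}_{\phi}$ have the same data at the vertices of $Q$, the difference $\varphi$ satisfies $\partial^{\,\vec{\alpha}}\varphi(\vec{x}_{\vec{q}})=0$ for every $\vec{\alpha}\in\{0,\dots,k\}^p$ and every vertex $\vec{x}_{\vec{q}}$ of $Q$. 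Both $\mathcal{H}_{\phi}$ (a polynomial) and $\varphi$ are smooth on $\overline{Q}$, so all the manipulations below are legitimate.

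First I would record the elementary one-dimensional identity obtained by integrating by parts $k+1$ times, $\int_a^b g^{(k+1)}h^{(k+1)}\ud{t}=\sum_{j=0}^{k}(-1)^j\big[g^{(k+1+j)}h^{(k-j)}\big]_a^b+(-1)^{k+1}\int_a^b g^{(2k+2)}h\ud{t}$, valid for smooth $g,h$. Then I would apply it in the $x_1$ direction inside $I_p$: writing $\partial^{\,\vec{\beta}}=\partial_1^{k+1}D'$ with $D'=\partial_2^{k+1}\cdots\partial_p^{k+1}$ (which commutes with $\partial_1$), and using Fubini with $x_2,\dots,x_p$ as parameters, the inner $x_1$-integral is $\int_{a_1}^{b_1}\prn{\partial_1^{k+1}D'\mathcal{H}_{\phi}}\prn{\partial_1^{k+1}D'\varphi}\ud{x_1}$. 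Applying the identity with $g=D'\mathcal{H}_{\phi}$ and $h=D'\varphi$ (as functions of $x_1$), the final bulk term contains the factor $\partial_1^{2k+2}\mathcal{H}_{\phi}\equiv 0$ and so drops out, leaving only boundary contributions evaluated at $x_1\in\{a_1,b_1\}$ in which $\varphi$ carries at most $k$ derivatives in $x_1$.

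Next I would feed these boundary terms back into the remaining $(p-1)$-dimensional integral over $x_2,\dots,x_p$ and repeat the same step in the $x_2$ direction, then $x_3$, and so on. The invariant to verify is: after processing directions $x_1,\dots,x_\ell$, the expression for $I_p$ is a finite sum of integrals over the codimension-$\ell$ faces of $Q$, in each of which the $\mathcal{H}_{\phi}$-factor is still a polynomial of degree $\le 2k+1$ in each of the not-yet-processed variables $x_{\ell+1},\dots,x_p$ (so the next bulk term again vanishes, since $\partial_{\ell+1}^{2k+2}\mathcal{H}_{\phi}\equiv 0$), while the $\varphi$-factor carries at most $k$ derivatives in each of $x_1,\dots,x_\ell$ and is evaluated at the corresponding endpoints. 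After all $p$ directions are processed, every surviving term is a product of corner values of derivatives of $\mathcal{H}_{\phi}$ times a factor $\partial^{\,\vec{\alpha}}\varphi(\vec{x}_{\vec{q}})$ with $\vec{\alpha}\in\{0,\dots,k\}^p$ and $\vec{x}_{\vec{q}}$ a vertex of $Q$; by (b) each such factor vanishes, hence $I_p=0$.

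The computational steps (the one-dimensional integration by parts and the arithmetic of the derivative orders) are routine; the one place that needs care --- and the main obstacle --- is the bookkeeping of the nested boundary terms, i.e.\ making the invariant above precise and checking that the derivative order on $\varphi$ in each already-processed direction never exceeds $k$ and that the $\mathcal{H}_{\phi}$-factor stays polynomial of the right degree in the remaining directions. An equivalent, perhaps cleaner, organization first expands $\mathcal{H}_{\phi}|_Q$ into a sum of tensor-product terms $\prod_{i}g_i(x_i)$ using Theorem~\ref{thm:interpolant:exists}, reduces by linearity to a single such term, and then separates variables directly; there the degree bound $\deg g_i\le 2k+1$ plays the role of fact (a).
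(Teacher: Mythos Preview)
Your proof is correct and follows essentially the same route as the paper's: integrate by parts $k+1$ times in one coordinate so the bulk term acquires a factor $\partial_i^{2k+2}\mathcal{H}_\phi\equiv 0$, then handle the boundary terms on the faces. The only difference is organizational --- the paper packages the argument as an induction on the dimension $p$ (applying the inductive hypothesis to the boundary integrals on the two faces $x_p\in\{0,1\}$, after noting that $\partial_p^{k+1+j}\mathcal{H}_\phi$ restricted to a face is a $(p{-}1)$-$n$ polynomial and that $\partial_p^{k-j}\varphi$ has zero data there), whereas you unroll that induction explicitly, processing directions $x_1,\dots,x_p$ one after another and tracking the invariant you describe.
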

%
\begin{proof}
 The proof is by induction over $p\/$. Without loss of generality,
 assume $Q = [0\/,\,1]^p\/$ is the unit $p$-cube. For $p=1\/$ the result
 holds, since $k+1\/$ integrations by parts can be used to obtain:
 \begin{equation*}
  I_1 = (-1)^{k+1}\,\int_0^1 \prn{\partial_1^{\,2k+2}\,
  \mathcal{H}_{\phi}(x_1)}\,\varphi(x_1)\ud{x_1} = 0\/,
 \end{equation*}
 where there are no boundary contributions because the data for
 $\varphi\/$ vanishes, and we have used that
 $\partial_1^{\,2k+2}\mathcal{H}_{\phi}(x) = 0\/$. Assume now that the
 result is true for $p-1>0\/$, and do $k+1\/$ integrations by parts
 over the variable $x_p\/$. This yields
 \begin{equation*}
  I_p = (-1)^{k+1} \int_Q \prn{\partial^{\,\vec{\beta}{\,}'}\,
  \partial_p^{2k+2}\,\mathcal{H}_{\phi}(\vec{x})}
  \prn{\partial^{\,\vec{\beta}{\,}'}\varphi(\vec{x})}\ud{\vec{x}} +
  \mbox{BTC} = \mbox{BTC}\/,
 \end{equation*}
 where BTC stands for ``Boundary Terms Contributions'',
 $\vec{\beta}{\,}' = (k+1,\dots,k+1)\/$ is a $(p-1)$-vector, and we have
 used that $\partial_p^{\,2k+2}\mathcal{H}_{\phi}(x) = 0\/$. Furthermore
 the BTC are a sum over terms of the form
 \begin{equation*}
  I_{p-1\/,\,j\/,\,q} = \int_{Q_q} \prn{\partial^{\,\vec{\beta}{\,}'}
  \partial_p^{k+j+1}\,\mathcal{H}_{\phi}(\vec{x})}
  \prn{\partial^{\,\vec{\beta}{\,}'} \partial_p^{k-j}\,
  \varphi(\vec{x})}\ud{\vec{x}}\/,\quad 0 \leq j \leq k\/,
  \quad 0 \leq q \leq 1\/,
 \end{equation*}
 where $Q_q\/$ stands for the $(p-1)$-cube obtained from $Q\/$ by setting
 either $x_p = 0\/$ ($q=0\/$) or $x_p = 1\/$ ($q=1\/$). Now: the data
 for $\varphi\/$ is, precisely, the union of the data for
 $\{\partial_p^{k-j}\,\varphi\}_{j=0}^k\/$ in both $Q_0\/$ and $Q_1\/$.
 Furthermore $\partial_p^{k+j+1}\,\mathcal{H}_{\phi}\/$, restricted to
 either $Q_0\/$ or $Q_1\/$, is a $(p-1)$-$n$ polynomial. Hence
 we can use the induction hypothesis to conclude that
 $I_{p-1\/,\,j\/,\,q} = 0\/$, for all choices of $j\/$ and $q\/$. Thus
 $I_p = \mbox{BTC} = 0\/$, which concludes the inductive proof.
\end{proof}

\subsection{Total and Partial $k$-Jets} \label{subsec:total:partial:jets}
%
The \emph{jet of a smooth function} $\phi:\Omega\to\mathbb{R}\/$ is the
collection of all the derivatives $\partial^{\,\vec{\alpha}}\phi\/$, where
$\vec{\alpha}\in \mathbb{N}_0^p\/$. The interpolants defined in
\S~\ref{subsec:global_interpolant} are based on parts of the jet of a
function, evaluated at grid points. Next we introduce two different notions
characterizing parts of jet.
%
\begin{definition} \label{def:total:k-jet}
 The \emph{total $k$-jet} is the collection of all the derivatives
 $\partial^{\,\vec{\alpha}}\phi\/$, where $\abs{\vec{\alpha}} \le k\/$.
\end{definition}
%
\begin{definition} \label{def:partial:k-jet}
 The \emph{partial $k$-jet} is the collection of all derivatives
 $\partial^{\,\vec{\alpha}}\phi\/$, where $\vec{\alpha}\in\{0,\dots,k\}^p\/$.
\end{definition}
%
Thus the total $k$-jet contains all the derivatives up to the order $k\/$,
while the partial $k\/$-jet contains all the derivatives for which the
partial derivatives with respect to each variable are at most order $k\/$.
For any given $k\ge 0$, the total $k$-jet is contained within the partial
$k$-jet (strictly if $k > 0\/$), while the partial $k$-jet is a subset of
the total $n$-jet, with $n=p\/k\/$.

\subsection{Projections in Function Spaces}
\label{subsec:projections_function_spaces}
%
The aim  of  this subsection is to use the global interpolant introduced
in \S~\ref{subsec:global_interpolant} to define projections of functions
--- which are needed for the projection step in the general class of
advect--and--project methods specified in \S~\ref{subsec:advect_project}.
This requires the introduction of appropriate spaces where the projections
operate. Further, because of the advection that occurs between projection
steps in the \S~\ref{subsec:advect_project} methods, it is necessary that
these spaces be invariant under diffeomorphisms. Unfortunately, the
coordinate bias (see Remark~\ref{rem:Ck:only}) that the generalized
Hermite interpolants exhibit causes difficulties with this last
requirement, as explained next.

The functions that we are interested in projecting are smooth advections,
over one time step, of some global interpolant $\mathcal{H}\/$. Let
$\phiSP\/$ be such a function. From Remark~\ref{rem:Ck:only}, it follows
that $\phiSP\/$ is $C^k\/$ and piece-wise smooth, with singularity
hypersurfaces given by the advection of the grid hyperplanes. Inside each
of the regions that result from advecting a single grid cell, $\phiSP\/$
is smooth all the way to the boundary of the region. Furthermore, at a
singularity hypersurface:
\begin{itemize}
 \item[(i)]
 Partial derivatives of $\phiSP\/$ involving differentiation normal
 to the hypersurface of order $k\/$ or lower are defined and continuous.
 \item[(ii)]
 Partial derivatives of $\phiSP\/$ involving differentiation normal to
 the hypersurface of order $k+1\/$ are defined, but (generally) have a
 simple jump discontinuity.
 \item[(iii)]
 Partial derivatives of $\phiSP\/$ involving differentiation normal to
 the hypersurface of order higher than $k+1\/$ are (generally) not
 defined as functions. However, they are defined on each side, and
 are continuous up to the hypersurface.
\end{itemize}
Imagine now a situation where a grid point is on one of the singularity
hypersurfaces. Then the interpolant $\mathcal{H}_{\phiSP}\/$, as given by
Definition~\ref{def:H_phi}, may not exist --- because one, or more, of the
partial derivatives of $\phiSP\/$ needed at the grid point does not exist.
This is a serious problem for the advect--and--project strategy advocated
in \S~\ref{subsec:advect_project}. This leads us to the considerations
below, and a recasting of Definition~\ref{def:H_phi}
(i.e.: Definition~\ref{def:H_phi:Num}) that resolves the problem.

We are interested in solutions to Equation~\eqref{eq:linear_advection}
that are are sufficiently smooth.
Let then $\phiSP\/$ be an approximation to such a solution. In this
case, from Theorem~\ref{thm:p-n_polynomial_accuracy}, we should add to
(i -- iii) above the following
\begin{itemize}
 \item[(iv)]
 The jumps in any partial derivative of $\phiSP\/$ (across a singularity
 hypersurface) cannot be larger than $O(h^{n+1-s})\/$, where $n=2\/k+1\/$
 and $s\/$ is the order of the derivative.
\end{itemize}
Then, from Lemma~\ref{lem:p-n_polynomial_accuracy_data}, we see that these
jumps are below the numerical resolution --- hence, essentially, not
present from the numerical point of view. This motivates the following
generalization of Definition~\ref{def:H_phi}:
%
\begin{definition} \label{def:H_phi:Num}
 For functions $\phiSP\/$ that satisfy the restrictions in items~(i -- iv)
 above, define the global interpolant $\mathcal{H}_\phiSP\/$ as in
 Definition~\ref{def:H_phi}, with one exception:
 whenever a partial derivative
 $\phiSP_{\vec{\alpha}}^{\vec{m}} = \partial^{\,\vec{\alpha}}
 \phiSP(\vec{x}_{\vec{m}})\/$ which is needed for the data at a grid point
 does not exist (because the grid point is on a singularity
 hypersurface), supply a value using the formula
 \begin{equation} \label{eqn:partials:by:limsup}
  \partial^{\vec{\alpha}} \phiSP(\vec{x}_{\vec{m}}) = \frac{1}{2}\,\prn{
   \limsup_{\vec{x} \to \vec{x}_{\vec{m}}} \partial^{\vec{\alpha}}
   \phiSP(\vec{x}) + \liminf_{\vec{x} \to \vec{x}_{\vec{m}}}
   \partial^{\vec{\alpha}} \phiSP(\vec{x})}\/.
 \end{equation}
\end{definition}
%
\begin{remark} \label{rem:justify:limsup:def}
 The value in equation \eqref{eqn:partials:by:limsup} for
 $\partial^{\vec{\alpha}} \phiSP(\vec{x}_{\vec{m}})\/$ ignores the
 ``distribution component'' of
 $\partial^{\vec{\alpha}} \phiSP(\vec{x}_{\vec{m}})\/$ --- i.e. Dirac's
 delta functions and derivatives, with support on the singularity
 hypersurface. These components originate purely because of
 approximation errors: small mismatches in the derivatives of the
 Hermite interpolants across the grid hyperplanes. Thus we expect
 Equation~\eqref{eqn:partials:by:limsup} to provide an accurate
 approximation to the corresponding partial derivative of the smooth
 function that $\phiSP\/$ approximates. Note that, while we are unable
 to supply a rigorous justification for this argument, the numerical
 experiments that we have conducted provide a validation ---
 see \S~\ref{sec:numerical_results}.
\end{remark}
%
\begin{remark} \label{rem:alternative:def:H_phi:Num}
 An alternative to Definition~\ref{def:H_phi:Num}, that also allows the
 construction of global interpolants from advected global interpolants,
 is introduced in Definition~\ref{def:projection_total} below.
\end{remark}
%
Motivated by the discussion above, we now consider two types of
projections (and corresponding function spaces). Note that in the
theoretical formulation that follows below, the ``small jumps in the
partial derivatives'' property is not built into the definitions of the
spaces. This is not untypical for numerical methods. For instance, finite
difference discretizations are only meaningful for sufficiently smooth
functions. However, the finite differences themselves are defined for
any function that lives on the grid, even though the notion of
smoothness does not make sense for such grid-functions.
%
\begin{definition} \label{def:Snu:space}
 Let $S^\nu\/$ denote the space of of all the $L^\infty\/$ functions
 $\phiSP:\Omega\to\mathbb{R}\/$ which are $\nu$-times differentiable
 a.e., with derivatives in $L^\infty\/$.

Here differentiable is meant in the classical sense:
 $\phiSP(\vec{x}+\vec{u}) = \phiSP(\vec{x}) +
 \prn{D\,\phiSP}[\vec{u}] +
 \frac{1}{2}\/\prn{D^2\,\phiSP}[\vec{u}\/,\vec{u}] +\,\dots\,+
 \frac{1}{\nu!}\/\prn{D^\nu\,\phiSP}[\vec{u}\/,\dots\/,\,\vec{u}] +
 o\prn{\|\vec{u}\|^\nu}\/$
near a point where the function is differentiable,
where $\prn{D^\mu\,\phiSP}\/$, $1 \leq \mu \leq \nu\/$,
is a $\mu$-linear form.
\end{definition}
%
\begin{definition} \label{def:Sk+:space}
 Let $S^{k\/,\,+}\/$ be defined by  $S^{k\/,\,+} = S^\nu \cap C^k\/$,
 with $\nu = p\/k\/$.
\end{definition}
%
\begin{definition} \label{def:define:pointwise}
 For any function $\phiSP \in S^\nu\/$, and any
 $\vec{\alpha} \in \mathbb{N}^p_0\/$ with $|\vec{\alpha}| \leq \nu\/$,
 define $\partial^{\vec{\alpha}} \phiSP\/$ at every point
 $\vec{x}_{\vec{m}}\/$ in the rectangular grid, via
 \begin{equation} \label{eqn:partials:by:esslimsup}
  \partial^{\vec{\alpha}} \phiSP(\vec{x}_{\vec{m}}) = \frac{1}{2}\,\prn{
  \mathrm{ess} \limsup_{\vec{x} \to \vec{x}_{\vec{m}}}
  \partial^{\vec{\alpha}} \phiSP(\vec{x}) +
  \mathrm{ess} \liminf_{\vec{x} \to \vec{x}_{\vec{m}}}
  \partial^{\vec{\alpha}} \phiSP(\vec{x})}\/.
 \end{equation}
 where $\mathrm{ess}\limsup\/$ and  $\mathrm{ess}\liminf\/$ denote the
 essential upper and lower limits~\cite{Arutyunov2000}, respectively.
 Notice that this last formula differs from
 \eqref{eqn:partials:by:limsup} by the use of essential limits only.

 The definition in Equation~\eqref{eqn:partials:by:esslimsup} is not the
 only available option. One could use the upper limit only, or the lower
 limit only, or some intermediate value between these two --- since
 numerically the various alternatives lead to differences that are of the
 order of the truncation errors. In particular, a single sided evaluation
 is more efficient, so this is what we use in our numerical implementations.
 In what follows we assume that a choice has been made, e.g.: the one
 given by Equation~\eqref{eqn:partials:by:esslimsup}, or the one given
 in Remark~\ref{rem:analytic:update:gridlines}.
\end{definition}
%
\begin{definition} \label{def:projection_partial}
 In $S^{k\/,\,+}\/$, define the projection
 $P^+_k:S^{k\/,\,+}\to S^{k\/,\,+}\/$ as the application of the interpolant
 defined in \S~\ref{subsec:global_interpolant},
 i.e.~$P^+_k\phiSP = \mathcal{H}_{\phiSP}\/$.
\end{definition}
%
\begin{definition} \label{def:projection_total}
 In $S^k\/$, define the projection $P_k:S^k\to S^k\/$ by the following
 steps.
 \begin{enumerate}[(1)]
  \item
  Given a function $\phiSP\in S^k\/$, evaluate the total $k$-jet at the
  grid points $\vec{x}_{\vec{m}}\/$.
  \item
  Approximate the remaining derivatives in the partial $k$-jet by a
  process like the one described in \S~\ref{subsec:optimally:local}. Notice
  that, if the approximation of the derivatives of order $\ell>k\/$ is done
  with $O(h^{n+1-\ell})\/$ errors, then (by
  Lemma~\ref{lem:p-n_polynomial_accuracy_data}) the full accuracy of the
  projection is preserved.
  \item
  Define $P_k\phiSP\/$ as the global interpolant
  (see \S~\ref{subsec:global_interpolant}) based on this approximate
  partial $k$-jet.
 \end{enumerate}
\end{definition}
%

\subsection{Construction of the Partial $k$-jet from the Total $k$-jet.}
\label{subsec:optimally:local}
%
For the advect--and--project methods introduced in
\S~\ref{subsec:advect_project}, the advection of the solution's derivatives
is a significant part of the computational cost. Furthermore, the
higher the derivative, the higher the cost. For example, when using
$P_k^+\/$ with $k=2\/$ and the approach in \S~\ref{subsubsec:epsilon:FD},
the cost of obtaining the $|\vec{\alpha}| > k\/$ derivatives in the partial
$k$-jet is (at the lowest) about three times that of obtaining the total
$k$-jet. Further: the cost ratio grows, roughly, linearly with $k\/$.
Thus it is tempting to design more efficient approaches that are based on
advecting the total $k$-jet only, and then recovering the higher
derivatives in the partial $k$-jet by a finite difference approximation of
the total $k$-jet data at grid points. We call such approaches
\emph{grid-based finite differences}, in contrast to the
\emph{$\varepsilon$-finite differences} introduced in
\S~\ref{subsubsec:epsilon:FD}.

Unfortunately, jet schemes based on grid-based finite difference
reconstructions of the higher derivatives in the partial $k$-jet have
one major drawback: the nice stabilizing properties that the Hermite
interpolants possess (see Theorem~\ref{thm:minimize:seminorm} and
\S~\ref{subsec:adv:funspaces}) are lost. Hence the
stability of these schemes is not ensured. Nevertheless, we explore this
idea numerically (see \S~\ref{subsec:numerics_stability} for some
examples). As expected, in the absence of a theoretical underpinning that
would allow us to distinguish stable schemes from their unstable
counterparts, many approaches that are based on grid-based finite
differences turn out to yield unstable schemes. However
%
\begin{itemize}
 \item
 In the case $k=1\/$, a stable scheme using a grid-based finite difference
 reconstruction of the partial $k$-jet, can be given. It is described in
 Example~\ref{ex:optimal:local:approx}. The particular reconstruction used
 has some interesting properties, which motivate
 Definition~\ref{def:optimally:local} below.
 \item
 In unstable versions of schemes that are based on grid-based
 reconstructions of higher derivatives, the instabilities are, in general,
 observed to be very weak: even for fairly small grid sizes, the growth
 rate of the instabilities is rather small --- an example of this
 phenomenon is shown in \S~\ref{subsec:numerics_stability}. Thus it may be
 possible to stabilize these schemes (e.g.\ by some form of jet scheme
 artificial viscosity) without seriously diminishing their accuracy.
\end{itemize}
%
\begin{definition} \label{def:optimally:local}
 We say that a projection is \emph{optimally local} if, at each grid node,
 the recovered data (e.g. derivatives with $|\vec{\alpha}| > k\/$ in the
 partial $k$-jet) depends solely on the known data (e.g. the total
 $k$-jet) at the vertices of a single grid cell.
\end{definition}
%
Of course, the projections $P_k^+\/$ in
Definition~\ref{def:projection_partial} are by default optimally local
(since there is no data to be recovered). However, as pointed out above,
the idea of optimally local projections that are based on the total
$k$-jet is that they produce more efficient jet schemes than using
$P_k^+\/$. In Example~\ref{ex:optimal:local:approx} below we present an
optimally local projection, which corresponds to $P_1\/$ in
Definition~\ref{def:projection_total}.
%
\begin{remark} \label{rem:why:oplocal}
 An important question is: \emph{why is optimal locality desirable?} The
 reason is that optimally local projections do not involve communication
 between cells. This has, at least, three advantageous consequences.
 First, near boundaries, a local formulation simplifies the enforcement
 of boundary conditions (e.g.\ see \S~\ref{subsec:Boundary:Conditions}).
 Second, in situations where adaptive grids are used, a local formulation
 can make the implementation considerably simpler than it would be for
 non-local alternatives. And third, locality is a desirable property for
 parallel implementations since communication boundaries between
 processors are reduced to lines of single cells.
\end{remark}
%
\begin{example} \label{ex:optimal:local:approx}
 In two space dimensions, we can define an optimally local version of the
 projection $P_1\/$, as follows below. In this projection we presume that
 the total $1$-jet is given at each grid point. To simplify the description,
 we work in the cell $Q = [0\/,\,h]\times[0\/,\,h]\/$, with
 $\vec{q} \in \{0\/,\,1\}^2\/$ corresponding to the vertex
 $\vec{x}_{\vec{q}} = \vec{q}\,h\/$. Thus $\phiSP^{\vec{q}}\/$ indicates the
 value $\phiSP(\vec{x}_{\vec{q}})\/$. We also use the convention that when
 $q_i = \2$, the quantity is defined or evaluated at $x_i = \frac{h}{2}$,
 for $i\in\{1\/,\,2\}$.

 \noindent
 Define $P_1\/$ as follows (this is the projection introduced, and
 implemented, in~\cite{NaveRosalesSeibold2010}). To construct the bi-cubic
 interpolant, at each vertex of $Q\/$ the derivative $\phiSP_{xy}\/$ must
 be approximated with second order accuracy, using the given data at the
 vertices. To do so:
 (i)   Use centered differences to  write second order approximations to
       $\phiSP_{xy}\/$ at the midpoints of each of the edges of $Q\/$.
       For example: $\phiSP_{xy}^{(0,\2)} =
       \frac{1}{h}\,\prn{\phiSP_x^{(0,1)}-\phiSP_x^{(0,0)}}\/$
       and $\phiSP_{xy}^{(\2,0)} =
       \frac{1}{h}\,\prn{\phiSP_y^{(1,0)}-\phiSP_y^{(0,0)}}\/$.
 (ii)  Use the four values obtained in (i) to define a bi-linear
       (relative to the rotated coordinate system $x+y\/$ and $x-y\/$)
       approximation to $\phiSP_{xy}\/$.
 (iii) Evaluate the bi-linear approximation obtained in (ii) at the
       vertices of $Q\/$.
 Three dimensional versions of these formulas are straightforward, albeit
 somewhat cumbersome.
\end{example}
%
We do not know whether stable analogs of $P_1\/$ exist for $P_k\/$ with
$k>1\/$. It is plausible that a construction similar to the one given in
Example~\ref{ex:optimal:local:approx} (i.e.\ based on using lower order
Hermite interpolants in rotated coordinate systems) may yield the desired
stability properties. This is the subject of current research.
%
\begin{remark} \label{rem:oplocal:noCk}
 The optimally local projection $P_1\/$ with ``single cell
 approximations'' uses, at each grid point, values for $\phiSP_{xy}\/$
 that are different for each of the adjoining cells. This means that
 (generally) $P_1 \phiSP \notin C^1\/$ --- though $P_1 \phiSP \in C^0\/$
 always. However, for sufficiently smooth functions $\phiSP\/$, the
 discontinuities in the derivatives of $P_1\,\phiSP\/$ across the grid
 lines cannot involve jumps larger than $O(h^{4-s})\/$, where $s\/$ is
 the order of the derivative. This is a small variation of the
 situation discussed from the beginning of
 \S~\ref{subsec:projections_function_spaces} through
 Remark~\ref{rem:justify:limsup:def}. The same arguments made there
 apply here.
\end{remark}
%
\begin{remark} \label{rem:oplocal:average}
 The lack of smoothness of the optimally local projection $P_1\/$ (i.e.
 Remark~\ref{rem:oplocal:noCk}) could be eliminated as follows. At each
 grid node consider all the possible values obtained for $\phiSP_{xy}\/$
 by cell based approximations (one per cell adjoining the node). Then
 assign to $\phiSP_{xy}\/$ the average of these values. This eliminates
 the multiple values, and yields an interpolant $P_1\/\phiSP\/$ that
 belongs to $C^1\/$.
 Formally this creates a dependence of the partial $1$-jet on the total
 $1$-jet that is not optimally local. However, all the operations done
 are solely cell-based (i.e.: the reconstruction of the partial $1$-jet)
 or vertex based (i.e.: the averaging). Hence, such an approach can
 easily be applied at boundaries or in adaptive meshes.
\end{remark}

\section{Advection and Update in Time} \label{sec:Advect:Update:Time}
%
The characteristic form of Equation~\eqref{eq:linear_advection} is
\begin{align}
 \frac{d\,\vec{x}}{d\/t} & = \vec{v}(\vec{x}\/,\,t)\/,
                                 \label{eq:characteristic_system_x} \\
 \frac{d\,\phi}{d\/t}    &= 0\/. \label{eq:characteristic_system_phi}
\end{align}
Let $\vec{X}(\vec{x}\/,\,\tau\/,\,t)\/$ denote the solution of the ordinary
differential equation~\eqref{eq:characteristic_system_x} at time $t\/$,
when starting with initial conditions $\vec{x}\/$ at time $t = \tau\/$.
Hence $\vec{X}\/$ is defined by
\begin{equation*}
 \pd{}{t}\vec{X}(\vec{x}\/,\,\tau\/,\,t) =
 \vec{v}(\vec{X}(\vec{x}\/,\,\tau\/,\,t)\/,\,t)\/, \quad \mbox{with}
 \quad \vec{X}(\vec{x}\/,\,\tau\/,\,\tau) = \vec{x}\/.
\end{equation*}
Due to \eqref{eq:characteristic_system_phi}, the solution of the partial
differential equation~\eqref{eq:linear_advection} satisfies
\begin{equation*}
 \phi(\vec{x}\/,\,t+\Delta t) =
 \phi(\vec{X}(\vec{x}\/,\,t+\Delta t\/,\,t)\/,\,t)\/.
\end{equation*}
That is: the solution at time $t+\Delta t\/$ and position $\vec{x}\/$ is
found by tracking the corresponding characteristic curve, given by
\eqref{eq:characteristic_system_x}, backwards to time $t\/$, and evaluating
the solution at time $t\/$ there.
Introduce now the solution operator $S_{\tau\/,\,t}\/$,
which maps the solution at time $t\/$ to the solution at time $\tau\/$.
Then $S_{\tau\/,\,t}\/$ acts on a function $g(\vec{x})\/$ as follows
\begin{equation} \label{eq:advection_operator}
 (S_{\tau\/,\,t}\,g)(\vec{x}) = g(\vec{X}(\vec{x}\/,\,\tau\/,\,t))\/.
\end{equation}
Hence, the solution of \eqref{eq:linear_advection} satisfies
\begin{equation*}
 \phi(\vec{x}\/,\,t+\Delta t) = S_{t+\Delta t\/,\,t}\,\phi(\vec{x}\/,\,t)\/.
\end{equation*}
In particular, the solution at time $t=n\,\Delta t\/$ can be obtained by
applying successive advection steps to the initial conditions
\begin{equation*}
 \phi(\vec{x}\/,\,t) = S_{t\/,\,t-\Delta t} \circ \dots \circ
 S_{2\Delta t\/,\, \Delta t} \circ S_{\Delta t\/,\,0}\,\phiIC(\vec{x})\/.
\end{equation*}

\subsection{Approximate Advection} \label{subsec:Approx:Advec}
%
Let $\vec{\mathcal{X}}\/$ be an approximation to the exact solution
$\vec{X}\/$, as arising from a numerical ODE solver --- e.g. a high order
Runge-Kutta method. By analogy to the true advection operator
\eqref{eq:advection_operator}, introduce an approximate advection
operator, defined as acting on a function $g(\vec{x})\/$ as follows
\begin{equation} \label{eq:advection_operator_approx}
 (A_{\tau\/,\,t}\,g)(\vec{x}) =
 g(\vec{\mathcal{X}}(\vec{x}\/,\,\tau\/,\,t))\/.
\end{equation}
The approximate characteristic curves, given by $\vec{\mathcal{X}}\/$,
motivate the following definition.
\begin{definition} \label{def:foot}
 The point $\xfoot = \vec{\mathcal{X}}(\vec{x}\/,\,t+\Delta t\/,\,t)\/$
 is called the foot of the (approximate) characteristic through
 $\vec{x}\/$ at time $t+\Delta\/t\/$.
 Note that $\xfoot = \xfoot(\vec{x}\/,\,t\/,\,\Delta t)\/$, but we
 do not display these dependencies when obvious.
\end{definition}
Let now $\vec{\mathcal{X}}(\vec{x}\/,\,t+\Delta t\/,\,t)\/$ represent
a single ODE solver step for \eqref{eq:characteristic_system_x}, from
$t+\Delta t\/$ to $t\/$, starting from $\vec{x}\/$ --- i.e. let
$\Delta t\/$ be the ODE solver time step. Then the successive
application of approximate advection operators
\begin{equation} \label{eq:evolution_to_zero_approx}
 A_{t\/,\,t-\Delta t} \circ \dots \circ
 A_{2\Delta t\/,\, \Delta t} \circ A_{\Delta t\/,\,0}\,\phiIC(\vec{x})
\end{equation}
yields an approximation to the solution of \eqref{eq:linear_advection} at
time $t=n\Delta t\/$. In principle this formula provides a way to
(approximately) solve \eqref{eq:characteristic_system_x} by taking (for
each point $\vec{x}\/$ at which the solution is desired at time
$t=n\,\Delta t\/$) $n\/$ ODE solver steps from time $t\/$ back to time
$0\/$, and then
evaluating the initial conditions at the resulting position. However, as
described in \S~\ref{sec:introduction}, we are interested in approaches
that allow access to the solution at each time step (represented on a
numerical grid by a finite amount of data), and then advance it forward
to the next time step. Hence the method provided by expression
\eqref{eq:evolution_to_zero_approx} is not adequate.

\subsection{Advect--and--Project Approach} \label{subsec:advect_project}
%
In this approach, an appropriate projection is applied at the end of every
time step, after the advection. The projection allows the representation
of the solution, at each of a discrete set of times, with a finite amount
of data.
Here, we consider projections based on function and derivative evaluations
at the grid points, as in \S~\ref{subsec:projections_function_spaces}.
Thus, let $P\/$ denote a projection operator --- e.g.\ see Definitions
\ref{def:projection_partial}, \ref{def:projection_total}, or
\ref{def:optimally:local}.
Then the approximate solution method is defined by
\begin{equation}
\label{eq:approxiate_solution}
 \phi_{\text{approx}}(\vec{x},t) =
 \prn{P \circ A_{t\/,\,t-\Delta t}} \circ \dots \circ
 \prn{P \circ A_{2\Delta t\/,\,\Delta t}} \circ
 \prn{P \circ A_{\Delta t\/,\,0}}\,\phiIC(\vec{x})\/.
\end{equation}
Namely, to advance from time $t\/$ to $t+\Delta t\/$, apply
$P \circ A_{t+\Delta t\/,\,t}\/$ to the (approximate) solution at time $t\/$.
The key simplification introduced by adding the projection step is that: in
order to define the (approximate) solution at time $t+\Delta t\/$, only the
data at the grid points is needed. At this point, all that is missing to
make this approach into a computational scheme, is a method to obtain the
grid point data at time $t+\Delta t\/$ from the (approximate) solution at
time $t\/$. This is done in \S~\ref{subsec:evolution:k-jet}.

As shown in \S~\ref{subsec:generalized_hermite_interpolation}, the
$P_k\/$ projection is an $O(h^{n+1})\/$ accurate approximation for
sufficiently smooth functions, where $n=2\/k+1\/$ and
$h = \max_{\/i} \Delta x_i\/$.
Thus, with $\Delta t \propto h\/$ the use of a locally
$(n+1)^{st}\/$ order time stepping scheme ensures that the full accuracy
is preserved. As usual, the global error is in general one order less
accurate, since $O(1/h)\/$ time steps are required to reach a given final
time. Hence, a $k$-jet scheme can be up to $n^{th}\/$ order accurate.

\subsection{Evolution of the $k$-Jet} \label{subsec:evolution:k-jet}
%
The projections defined in \S~\ref{subsec:projections_function_spaces}
require knowledge of parts of the jet of the (approximate) solution at the
grid points. Specifically, $P^+_k\/$ (see
Definition~\ref{def:projection_partial}) requires the partial $k$-jet,
and $P_k\/$ (see Definition~\ref{def:projection_total}) requires the
total $k$-jet.

A natural way to find the jet at time $t+\Delta t\/$ is to consider the
approximate advection operator \eqref{eq:advection_operator_approx}. Since
it defines an approximate solution everywhere, it also defines the
solution's spacial derivatives. Thus, by differentiating
\eqref{eq:advection_operator_approx}, update rules for all the elements of
the $k$-jet can be systematically inherited from the numerical scheme for
the characteristics.
%
\begin{definition} \label{def:superconsistent}
 Jet schemes for which the update rule for the whole (total or partial)
 $k$-jet is derived from one single approximate advection scheme are
 called \emph{superconsistent}.
\end{definition}
%
\begin{remark} \label{rem:nonsuperconsistent}
 Non-superconsistent jet schemes can be constructed, by first writing an
 evolution equation along the characteristics for each of the relevant
 derivatives --- by differentiating Equation~\eqref{eq:linear_advection},
 and then applying some approximation scheme to each equation.
 On the one hand, these schemes can be less costly than superconsistent
 ones, because an $\ell^{th}$ order derivative only needs
 an accuracy that is $\ell\/$ orders below that of the function value ---
 see Lemma~\ref{lem:p-n_polynomial_accuracy_data}. On the other hand, the
 benefits of an interpretation in function spaces are lost, such as the
 optimal coherence between all the entries in the $k$-jet, and the
 stability arguments of \S~\ref{subsec:adv:funspaces}.
\end{remark}
%
Notice that a superconsistent scheme has a very special property: even
though it is a fully discrete process, it updates the solution in time
in a way that is equivalent to carrying out the process given by
Equation~\eqref{eq:approxiate_solution} in the function space where the
projection is defined. Below we present two possible ways to find the
update rule for the $k$-jet: analytical differentiation and
$\varepsilon$-finite differences. Up to a small approximation error,
both approaches are equivalent. However, they can make a difference
with the ease of implementation.

\subsubsection{Analytical differentiation}
\label{subsubsec:Analytical_Diff}
%
Let $\mathcal{H}\/$ denote the approximate solution at time $t\/$. Since
each time step ends with a projection step, $\mathcal{H}\/$ is a piecewise
$p$-$n$ polynomial, defined by the data at time $t\/$. Using the short
notation
$\vec{\mathcal{X}} = \vec{\mathcal{X}}(\vec{x}\/,\,t+\Delta t\/,\,t)\/$,
update formulas for the derivatives (here up to second order) are given by:
\begin{align*}
 A_{t+\Delta t\/,\,t}\,\phi(\vec{x}\/,\,t)
 & = \mathcal{H}(\vec{\mathcal{X}}\/,\,t)\/,
 \\
 \frac{\partial}{\partial x_i}
 A_{t+\Delta t\/,\,t}\,\phi(\vec{x}\/,\,t)
 & = \frac{\partial\vec{\mathcal{X}}}{\partial x_i}
 \cdot D\,\mathcal{H}(\vec{\mathcal{X}}\/,\,t)\/,
 \\
 \frac{\partial^2}{\partial x_i\partial x_j}
 A_{t+\Delta t\/,\,t}\,\phi(\vec{x}\/,\,t)
 & = \frac{\partial^2\vec{\mathcal{X}}}{\partial x_i\,\partial x_j}
 \cdot D\,\mathcal{H}(\vec{\mathcal{X}}\/,\,t) +
 \prn{\frac{\partial\vec{\mathcal{X}}}{\partial x_i}}^T
 \cdot D^2\,\mathcal{H}(\vec{\mathcal{X}}\/,\,t)
 \cdot \frac{\partial\vec{\mathcal{X}}}{\partial x_j}\/,
\end{align*}
where $D\mathcal{H}\/$ is the Jacobian and $D^2\mathcal{H}\/$ the Hessian
of $\mathcal{H}\/$. It should be clear how to continue this pattern for
higher derivatives. Since $\mathcal{H}\/$ is a $p$-$n$ polynomial, its
derivatives are easy to compute analytically. The partial
derivatives of $\vec{\mathcal{X}}\/$ follow from the ODE solver formulas,
as the following example (using a Runge-Kutta scheme) illustrates.
%
\begin{example} \label{ex:ShuOsher:analytic}
 When tracking the total $1$-jet (this would be called a
 \emph{gradient-augmented scheme}, see \cite{NaveRosalesSeibold2010}), the
 Hermite interpolant is fourth order accurate. Hence, in order to achieve
 full accuracy when scaling $\Delta t\propto h\/$, the advection operator
 should be approximated with a locally fourth order accurate time-stepping
 scheme. An example is the Shu-Osher scheme \cite{ShuOsher1988},
 which in superconsistent form looks as follows.
 \begin{equation*}
  \begin{split}
   \vec{x}_1
   & = \vec{x} - \Delta t \; \vec{v}\,(\vec{x}\/,\,t+\Delta t)\/,
   \\
   \nabla\,\vec{x}_1
   & = I - \Delta t \; \nabla \, \vec{v}\,(\vec{x}\/,\,t+\Delta t)\/,
   \\
   \vec{x}_2
   & = \tfrac{3}{4}\,\vec{x} + \tfrac{1}{4}\,\vec{x}_1 -
   \tfrac{1}{4}\,\Delta t\;\vec{v}\,(\vec{x}_1\/,\,t)\/,
   \\
   \nabla\vec{x}_2
   & = \tfrac{3}{4}\,I + \tfrac{1}{4}\,\nabla\vec{x}_1 -
   \tfrac{1}{4}\,\Delta t\;\nabla \,\vec{x}_1 \cdot \nabla\,
   \vec{v}\,(\vec{x}_1\/,\,t)\/,
   \\
   \xfoot
   & = \tfrac{1}{3}\,\vec{x} + \tfrac{2}{3}\,\vec{x}_2 - \tfrac{2}{3}\,
   \Delta t\;\vec{v}\,(\vec{x}_2\/,\,t+\tfrac{1}{2}\,\Delta t)\/,
   \\
   \nabla\,\xfoot
   & = \tfrac{1}{3}\,I + \tfrac{2}{3}\,\nabla\,\vec{x}_2 -
   \tfrac{2}{3}\,\Delta t\;\nabla\,\vec{x}_2 \cdot
   \nabla\,\vec{v}\,(\vec{x}_2\/,\,t+\tfrac{1}{2}\,\Delta t)\/,
   \\
   \phi(\vec{x}\/,\,t+\Delta t)
   & = \mathcal{H}(\xfoot\/,\,t)\/,
   \\
   (\nabla\phi)(\vec{x}\/,\,t+\Delta t)
   & = \nabla\,\xfoot \cdot \nabla\,\mathcal{H}(\xfoot\/,\,t)\/.
  \end{split}
 \end{equation*}
\end{example}
%
\begin{remark} \label{rem:analytic:update:gridlines}
 When $\xfoot\/$ is on a grid hyperplane,
 Equation~\eqref{eqn:partials:by:esslimsup} in
 Definition~\ref{def:define:pointwise} would require several
 ODE solves for derivatives that are discontinuous across the hyperplane
 (one for each of the local Hermite interpolants used in the adjacent grid
 cells). This would augment the computational cost without any increase in
 accuracy. Hence, in our implementations we do only one update, as follows:
 (i) A convention is selected that assigns the points on the grid
 hyperplanes as belonging to (a single) one of the cells that they are
 adjacent to.
 (ii) This convention then uniquely defines which Hermite interpolant
 should be used whenever $\xfoot\/$ is at a grid hyperplane.
\end{remark}
%
Finally, we point out that
\begin{itemize}
 \item
 The SSP property~\cite{GottliebShuTadmor2001} is not required for the
 tracking of the characteristics, since this is not a process in which TVD
 stability plays any role. The use of the Shu-Osher
 scheme~\cite{ShuOsher1988} in Example~\ref{ex:ShuOsher:analytic} is
 solely to illustrate the analytical differentiation process; any
 Runge-Kutta scheme of the appropriate order will do the job. In particular,
 when using $P_2^+\/$ a $5^{th}\/$ order Runge-Kutta scheme
 (e.g.\  the $5^{th}\/$ order component of the Cash-Karp
 method~\cite{CashKarp1990}) yields a globally $5^{th}\/$ order jet scheme.
 \item
 Jet schemes that result from using $P_0^+\/$ ($p$-linear interpolants) do
 not need any updates of derivatives, since they are based on function
 values only. In the one dimensional case, the CIR
 method~\cite{CourantIsaacsonRees1952} by Courant, Isaacson, and Rees is
 recovered. For these schemes a simple forward Euler ODE solver is
 sufficient.
 \item
 Using analytical differentiation, the update of a single partial
 derivative is almost as costly as updating all the partial derivatives of
 the same order. In particular, in order to update the partial $k$-jet all
 the derivatives of order less than or equal to $p\/k\/$ have to be
 updated --- even though only a few with order greater than $k\/$ are
 needed. Therefore, it is worthwhile to employ alternative ways to
 approximate the $|\vec{\alpha}|>k\/$ derivatives in the partial $k$-jet.
 One strategy is to use the grid-based finite differences described in
 \S~\ref{subsec:optimally:local}, assuming that a stable version can be
 found. Another strategy, which is essentially equivalent to analytical
 differentiation, is presented next.
\end{itemize}

\subsubsection{$\varepsilon$-finite differences} \label{subsubsec:epsilon:FD}
%
Here we present a way to update derivatives in a superconsistent fashion
that avoids the problem discussed in the third item at the end of
\S~\ref{subsubsec:Analytical_Diff}.
In $\varepsilon$-finite differences the definition of a superconsistent
scheme is applied by directly differentiating the approximately advected
function $A_{t+\Delta\/t\/,\,t}\,\phi(\vec{x}\/,\,t)\/$, or its derivatives.
This is done using finite difference formulas, with a separation
$\varepsilon \ll h\/$ chosen so that maximum accuracy is obtained. As
shown below, $\varepsilon$-finite differences can be either employed
singly, or in combination with analytic differentiation. In particular,
in order to update the partial $k$-jet, one could update the total
$k$-jet using analytical differentiation, and use $\varepsilon$-finite
differences to obtain the remaining partial derivatives. The approach is
best illustrated with a few examples.
%
\begin{example} \label{ex:epsilon:FD}
 Here we present examples of how to implement the advect--and--project
 approach given by Equation~\eqref{eq:approxiate_solution} in two
 dimensions, with $P = P_1^+\/$ or $P = P_2^+\/$
 (see Definition~\ref{def:projection_partial}), using a combination
 of analytical differentiation and $\varepsilon$-finite differences.
 The error analysis below estimates the difference between the
 derivatives obtained using $\varepsilon$-finite differences and the
 values required by superconsistency.
 This difference can be made much smaller than the
 numerical scheme's approximation errors, so that stability (see
 \S~\ref{subsec:adv:funspaces}) is preserved.
 In this analysis, $\delta > 0\/$ characterizes the accuracy of the
 floating point operations. Namely, $\delta\/$ is the smallest positive
 number such that $1+\delta \neq 1\/$ in floating point arithmetic.
 \begin{itemize}
  \item
  Advect--and--project using $P_1^+\/$. At time $t+\Delta t\/$ and at every
  grid point $(x_m\/,\,y_m)\/$, the values for
  $(\phi\/,\,D\/\phi\/,\,\phi_{x\/y})\/$ must be computed from the solution
  at time $t\/$. To do so, use the approximate advection solver
  $\vec{\mathcal{X}}\/$ to compute the approximately advected solution
  $\phi^{\vec{q}}\/$ at the four points
  $(x_m+q_1\,\varepsilon\/,\, y_m + q_2\,\varepsilon)\/$,
  where $\vec{q}\in\{-1\/,\,1\}^2$.
  From these values
  $\phi^{(1,1)}$, $\phi^{(-1,1)}$, $\phi^{(1,-1)}$, and $\phi^{(-1,-1)}$
  we obtain the desired derivatives at the grid point
  using the $O(\varepsilon^2)\/$ accurate stencils
  \begin{align*}
  \phi_{\phantom{xy}} &=
  \;\,\tfrac{1}{4}\,(\phi^{(1,1)}+\phi^{(-1,1)}
  +\phi^{(1,-1)}+\phi^{(-1,-1)})\;, \\
  \phi_{x\phantom{y}} &=
  \,\tfrac{1}{4\varepsilon}\,(\phi^{(1,1)}-\phi^{(-1,1)}
  +\phi^{(1,-1)}-\phi^{(-1,-1)})\;, \\
  \phi_{y\phantom{x}} &=
  \,\tfrac{1}{4\varepsilon}\,(\phi^{(1,1)}+\phi^{(-1,1)}
  -\phi^{(1,-1)}-\phi^{(-1,-1)})\;, \\
  \phi_{xy} &=
  \tfrac{1}{4\varepsilon^2}(\phi^{(1,1)}-\phi^{(-1,1)}
  -\phi^{(1,-1)}+\phi^{(-1,-1)})\;.
  \end{align*}
  %
  \item
  Advect--and--project using $P_2^+\/$. At time $t+\Delta t\/$ and at every
  grid point $(x\/,\,y) = (x_m\/,\,y_m)\/$, the values for $(\phi\/,\,
  D\phi\/,\, D^2\phi\/,\, \phi_{x\/x\/y}\/,\, \phi_{x\/y\/y}\/,\,
  \phi_{x\/x\/y\/y})\/$ must be computed from the solution at time $t\/$. To
  do so, use the approximate advection solver $\vec{\mathcal{X}}\/$ with
  analytic differentiation, to compute $(\phi\/,\, D\phi\/,\,D^2\phi)\/$ at
  the three points $(x_m\/,\,y_m)\/$ and $(x_m \pm \varepsilon\/,\,y_m)\/$.
  From this obtain
  $(\phi_{x\/x\/y}\/,\,\phi_{x\/y\/y}\/,\,\phi_{x\/x\/y\/y})\/$ at the grid
  point using $O(\varepsilon^2)\/$ accurate centered differences in $x\/$.
 \end{itemize}
  Error analysis. In both cases $P_1^+\/$ and $P_2^+\/$, the errors
  in the approximations are:
  $\mathcal{E} = O(\varepsilon^2) + O(\delta)\/$ for averages,
  $\mathcal{E} = O(\varepsilon^2)+O(\delta/\varepsilon)\/$ for
  first order centered $\varepsilon$-differences, and
  $\mathcal{E} = O(\varepsilon^2)+O(\delta/\varepsilon^2)\/$ for
  second order centered $\varepsilon$-differences.
  Thus the optimal choice for $\varepsilon\/$ is
  $\varepsilon = O(\delta^{1/4})\/$, which yields
  $\mathcal{E} = O(\delta^{1/2})\/$ for all approximations.
\end{example}
%
\begin{remark} \label{rem:epsilon:update:gridlines}
 In Remark~\ref{rem:analytic:update:gridlines} the scenario of a
 characteristic's foot point falling on a cell boundary is addressed.
 For $\varepsilon$-finite differences the analogous situation is more
 critical, since several characteristics, separated by $O(\varepsilon)\/$
 distances, are used. Whenever the foot points for these characteristics
 fall in different cells, the interpolant corresponding to one and the
 same cell must be used in the $\varepsilon$-finite differences. This is
 always possible, since the interpolants are $p$-$n$-polynomials, and thus
 defined outside the cells.
\end{remark}
%
Finally, we point out that,
when computing derivatives with $\varepsilon$-finite differences,
round-off error becomes important --- the more so the higher the
derivative. The error for an $s^{th}\/$ order derivative, with an order
$O(\varepsilon^r)\/$ accurate $\varepsilon$-finite difference, has the
form $\mathcal{E} = O(\varepsilon^r)+O(\delta/\varepsilon^s)\/$. The best
choice is then $\varepsilon = O(\delta^{1/(r+s)})\/$, which yields
$\mathcal{E} = O(\delta^{r/(s+r)})\/$. Hence, under some circumstances
it may be advisable to use high precision arithmetic and thus make
$\delta\/$ very small.
Given their relatively simple implementation, $\varepsilon$-finite
differences can be the best choice in many situations.

\subsection{Boundary Conditions} \label{subsec:Boundary:Conditions}
%
In this section we illustrate the fact that, due to their use of the
method of characteristics, jet schemes treat boundary conditions
naturally. For simplicity, we consider the two dimensional computational
domain $\Omega=[0,1]^2\/$, equipped with a regular square grid of
resolution $h = \frac{1}{N}\/$ --- i.e.\ the grid points are
$\vec{x}_{\vec{m}} = h\,\vec{m}\/$, where $\vec{m} \in \{0,\dots,N\}^2$.
In $\Omega\/$ we consider the advection equation
\begin{equation} \label{eqn:bc:01}
 \phi_t + u\,\phi_x + v\,\phi_y = 0\/,
 \quad \mbox{with initial conditions} \quad
 \phi(x\/,\,y\/,0) = \Phi(x\/,\,y)\/.
\end{equation}
Let us assume that $u > 0\/$ on both the left and right boundaries,
$v < 0\/$ on the bottom boundary, and $v > 0\/$ on the top boundary.
Hence the characteristics enter the square domain of computation
$\Omega\/$ only through the left boundary, where boundary conditions
are needed. For example: Dirichlet $\phi(0\/,\,y\/,t) = \xi(y\/,\,t)\/$
or Neumann $\phi_x(0\/,\,y\/,t) = \zeta(y\/,\,t)\/$.
We will also consider the special situation where $u \equiv 0\/$ on
the left boundary, in which case no boundary conditions are needed.

We now approximate \eqref{eqn:bc:01} with a jet scheme, with the time
step restricted by  $\Delta t < h/\Lambda\/$, where $\Lambda\/$ is an
upper bound for $\sqrt{u^2+v^2}\/$ --- so that for each grid point
$\norm{\xfoot-\vec{x}} < h\/$. Then $\xfoot \in \Omega\/$ for any node
$(x_m\/,\,y_n)\/$ with $m \neq 0\/$. Hence: at these nodes the data
defining the solution can be updated by the process described earlier
in this paper. We only need to worry about how to determine the data
on the nodes for which $x = 0\/$. There are three cases to describe.
\begin{itemize}
 \item[(i)]   Dirichlet boundary conditions. Then, on $x = 0\/$ we have:
 $\phi_t = \xi_t\/$ and $\phi_y = \xi_y\/$, while $\phi_x\/$ follows
 from the equation: $\phi_x = - (\phi_t + v\,\phi_y)/u\/$. Formulas for
 the higher order derivatives of $\phi\/$ at the left boundary can be
 obtained by differentiating the equation. For example, from
 \begin{equation*}
  \phi_{ty} + u_y\,\phi_x + u\,\phi_{xy} + v_y\,\phi_y + v\,\phi_{yy} = 0
 \end{equation*}
 we can obtain $\phi_{xy}\/$.
 \item[(ii)]  Neumann boundary conditions. Then, on $x = 0\/$, $\phi\/$
 satisfies $\phi_t + v\,\phi_y = -u\,\zeta\/$. This is a lower dimensional
 advection problem, which can be solved\footnote{Note that jet schemes can
    be easily generalized to problems with a source term. That
    is, replace Equation~\eqref{eq:linear_advection} by
    $\phi_t+\vec{v}\cdot\nabla\phi = S\/$, where $S$ is a known function.}
 to obtain $\xi = \phi(0\/,\,y\/,t)\/$. Higher order derivatives follow
 in a similar fashion.
 \item[(iii)] $u \equiv 0\/$ on the left boundary. Then
 $\xfoot \in \Omega\/$ for the nodes with $x=0\/$, so that the data
 defining the solution can be updated in the same way as for the nodes
 with $x > 0\/$.
\end{itemize}

\subsection{Advection and Function Spaces: Stability}
\label{subsec:adv:funspaces}
%
In this section we show how the interpretation of jet schemes as a process
of advect--and--project in function spaces, together with the fact that
Hermite interpolants minimize the stability functional in
Equation~\eqref{eq:functional}, lead to stability for jet schemes. We
provide a rigorous proof of stability in one space dimension for constant
coefficients. Furthermore, we outline (i) how the arguments could be
extended to higher space dimensions, and (ii) how the difficulties in the
variable coefficients case could be overcome. Note that, even though no
rigorous stability proofs in higher space dimensions are given here, the
numerical tests shown in \S~\ref{sec:numerical_results} indicate that
the presented jet schemes are stable in the 2-D variable coefficients case.
%
\subsubsection{Control over averages in 1-D Hermite interpolants}
\label{subsubsec:Control:Averages:1D:interp}
%
Let $k \in \mathbb{N}_0\/$, with $n=2\/k+1\/$. Let $\mathcal{H}\/$ be an
arbitrary $1$-$n$-polynomial, and define
\begin{equation} \label{eqn:stab:r_and_mu}
 r_k = r_k(x) = \frac{1}{(n+1)!}\,x^{k+1}\,(1-x)^{k+1}
\quad \mbox{and} \quad
 \mu_k = \mu_k(x) = r_k^{(k+1)}\/,
\end{equation}
where $f^{(j)}\/$ indicates the $j^{th}\/$ derivative of a function
$f=f(x)\/$. Integration by parts then yields
\begin{equation} \label{eqn:stab:mu_orthogonal_h}
 \int_0^1 \mu_k(x)\,\mathcal{H}^{(k+1)}(x)\/\ud{x} =
 (-1)^{k+1}
 \int_0^1 r_k(x)\,\mathcal{H}^{(n+1)}(x)\/\ud{x} = 0\/,
\end{equation}
where the boundary contributions vanish because of the $(k+1)^{st}$ order
zeros of $r_k\/$ at $x = 0\/$ and at $x = 1\/$. Integration by parts also
shows that, for any sufficiently smooth function $\phiSP = \phiSP(x)\/$
\begin{equation*}
 \int_0^1 \mu_k(x)\,\phiSP^{(k+1)}(x)\/\ud{x} =
 \prn{\mu_k^{(0)}\,\phiSP^{(k)}}\rule[-2ex]{0.2mm}{5ex}_{\,x=0}^{\,x=1} -
 \prn{\mu_k^{(1)}\,\phiSP^{(k-1)}}\rule[-2ex]{0.2mm}{5ex}_{\,x=0}^{\,x=1}
 + \dots + \int_0^1 \phiSP(x)\/\ud{x}\/,
\end{equation*}
where we have used that $\mu_k^{(k+1)} \equiv (-1)^{k+1}\/$. This can also
be written in the form
\begin{eqnarray}
 \int_0^1 \mu_k(x)\,\phiSP^{(k+1)}(x)\/\ud{x} -
 \int_0^1 \phiSP(x)\/\ud{x} & = &
 \prn{\mu_k^{(0)}\,\phiSP^{(k)}}\rule[-2ex]{0.2mm}{5ex}_{\,x=0}^{\,x=1} -
 \prn{\mu_k^{(1)}\,\phiSP^{(k-1)}}\rule[-2ex]{0.2mm}{5ex}_{\,x=0}^{\,x=1}
 \qquad \nonumber \\ && + \dots + (-1)^k
 \prn{\mu_k^{(k)}\,\phiSP^{(0)}}\rule[-2ex]{0.2mm}{5ex}_{\,x=0}^{\,x=1}\/.
 \label{eqn:stab:mu_and_phi}
\end{eqnarray}
In particular, assume now that $\mathcal{H}\/$ is the $n^{th}\/$ Hermite
interpolant for $\phiSP\/$ in the unit interval, and apply
\eqref{eqn:stab:mu_and_phi} to both $\mathcal{H}\/$ and $\phiSP\/$.
Then the resulting right hand sides in the equations are equal, so that
the left hand sides must also be equal. Thus, from
\eqref{eqn:stab:mu_orthogonal_h}, it follows that
\begin{equation} \label{eqn:stab:average}
 \int_0^1 \mathcal{H}(x)\/\ud{x} = \int_0^1 \phiSP(x)\/\ud{x} -
 \int_0^1 \mu_k(x)\,\phiSP^{(k+1)}(x)\/\ud{x}\/.
\end{equation}
This can be scaled, to yield a formula that applies to the $n^{th}\/$
Hermite interpolant for $\phiSP\/$ on any 1-D cell
$x_n \leq x \leq x_{n+1} = x_n + h\/$
\begin{equation} \label{eqn:stab:averageh}
 \int_{x_n}^{x_{n+1}} \mathcal{H}(x)\/\ud{x} =
 \int_{x_n}^{x_{n+1}} \phiSP(x)\/\ud{x} - h^{k+1}\,
 \int_{x_n}^{x_{n+1}} \mu_k\prn{\frac{x-x_n}{h}}\,
 \phiSP^{(k+1)}(x)\/\ud{x}\/.
\end{equation}
The following theorem is a direct consequence of this last formula
%
\begin{theorem} \label{thm:stab:averageh} 
 In one dimension, consider the computational domain
 $\Omega = \{x\,|\,a \leq x \leq b\}\/$ and the grid: $x_n = a + n\,h\/$
 --- where $0 \leq n \leq N\/$, $h = (b-a)/N\/$, and $N > 0\/$ is an
 integer. Then, for any function $\phiSP \in C^k\/$, with
 integrable $(k+1)^{st}\/$ derivative
 \begin{equation*}
 \int_a^b (P_k^{+}\phiSP)(x)\/\ud{x} = \int_a^b \phiSP(x)\/\ud{x} - \,
 h^{k+1}\,\int_a^b E_k\prn{\frac{x-a}{h}}\,\phiSP^{(k+1)}(x)\/\ud{x}\/,
 \end{equation*}
 where $E_k = E_k(z)\/$ is the periodic function of period one defined by
 $E_k(z) = \mu_k(z\,\mathrm{mod}\,1)\/$.
\end{theorem}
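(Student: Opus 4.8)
The plan is to derive the global formula by summing the per-cell identity \eqref{eqn:stab:averageh} over all $N$ grid cells $[x_n,x_{n+1}]$, $0 \le n \le N-1$. First I would note that, by Definition~\ref{def:projection_partial}, the projection $P_k^+\phiSP$ restricted to the cell $[x_n,x_{n+1}]$ is precisely the $n^{th}$-order Hermite interpolant $\mathcal{H}$ of $\phiSP$ on that cell (built from the partial $k$-jet data, which in one dimension coincides with the total $k$-jet). Hence \eqref{eqn:stab:averageh} applies verbatim on each cell, giving
\begin{equation*}
 \int_{x_n}^{x_{n+1}} (P_k^+\phiSP)(x)\ud{x} =
 \int_{x_n}^{x_{n+1}} \phiSP(x)\ud{x} - h^{k+1}
 \int_{x_n}^{x_{n+1}} \mu_k\!\prn{\tfrac{x-x_n}{h}}\phiSP^{(k+1)}(x)\ud{x}.
\end{equation*}

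Next I would sum over $n = 0,\dots,N-1$. The left-hand sides telescope into $\int_a^b (P_k^+\phiSP)(x)\ud{x}$ (using that $P_k^+\phiSP$ is defined piecewise by these interpolants and is at least continuous, so there is no ambiguity at the nodes), and likewise the first term on the right sums to $\int_a^b \phiSP(x)\ud{x}$. For the last term, the key observation is that on the cell $[x_n,x_{n+1}]$ one has $\frac{x-x_n}{h} = \frac{x-a}{h} - n$, and since $x_n = a + nh$, the quantity $\frac{x-a}{h}$ runs over $[n,n+1]$ as $x$ runs over the cell; therefore $\frac{x-x_n}{h} = \left(\frac{x-a}{h}\right) \bmod 1$ on the interior of each cell. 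Thus $\mu_k\!\prn{\frac{x-x_n}{h}} = E_k\!\prn{\frac{x-a}{h}}$ with $E_k(z) = \mu_k(z \bmod 1)$ the period-one periodic extension, and the sum of the integrals over the cells becomes the single integral $\int_a^b E_k\!\prn{\frac{x-a}{h}}\,\phiSP^{(k+1)}(x)\ud{x}$. This yields exactly the claimed identity.

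I do not anticipate a serious obstacle; the only points requiring a little care are bookkeeping ones. One is making sure the cell-wise identity \eqref{eqn:stab:averageh} is legitimately applicable under the stated hypotheses: $\phiSP \in C^k$ guarantees the Hermite data is well-defined at every node, and the assumed integrability of $\phiSP^{(k+1)}$ is exactly what is needed for the integral on the right to make sense (the integration-by-parts derivation of \eqref{eqn:stab:average} goes through with an integrable, rather than continuous, top derivative by a standard density/absolute-continuity argument, or one simply takes \eqref{eqn:stab:averageh} as already established). The other is the measure-zero issue at the grid nodes: $\mu_k$ is continuous on $[0,1]$ with $\mu_k(0)$ and $\mu_k(1)$ both well-defined, and the periodized $E_k$ may jump at integer points, but since these form a finite (hence null) set, they do not affect the value of any of the Lebesgue integrals involved, so the matching of $\mu_k\!\prn{\frac{x-x_n}{h}}$ with $E_k\!\prn{\frac{x-a}{h}}$ holds almost everywhere, which suffices. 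With these remarks in place the theorem follows immediately from \eqref{eqn:stab:averageh} by additivity of the integral over the partition $a = x_0 < x_1 < \cdots < x_N = b$.
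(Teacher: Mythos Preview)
Your proposal is correct and matches the paper's approach exactly: the paper states that the theorem is a direct consequence of the per-cell formula~\eqref{eqn:stab:averageh}, and your argument---summing that identity over all cells and identifying $\mu_k\!\prn{\frac{x-x_n}{h}}$ with the periodized $E_k\!\prn{\frac{x-a}{h}}$---is precisely how that direct consequence is obtained. Your additional bookkeeping remarks on regularity and the measure-zero node set are sound and simply make explicit what the paper leaves implicit.
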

%
\subsubsection{Stability for 1-D constant advection}
\label{subsubsec:Stab:1D:constant:advection}
%
Using Theorems~\ref{thm:stab:averageh} and \ref{thm:minimize:seminorm}
we can now prove stability for jet schemes in one dimension, and with a
constant advection velocity. For simplicity we will also assume periodic
boundary conditions.
%
\begin{theorem} \label{thm:stab:1d_cc} 
 Under the same hypothesis as in Theorem~\ref{thm:stab:averageh},
 consider the constant coefficients advection equation
 $\phi_t + v\,\phi_x = 0\/$ in the computational domain $\Omega\/$, with
 periodic boundary conditions $\phi(b\/,\,t) = \phi(a\/,\,t)\/$, and
 initial condition $\phi(x\/,\,0) = \phiIC(0)\/$. Approximate the solution
 by the jet scheme defined by the advect--and--project process of
 \S~\ref{subsec:advect_project} and the projection $P_k^+\/$, with a time
 step $\Delta t\/$ satisfying $\Delta t \geq O(h^{k+1})\/$. Then this
 scheme is stable, in the sense described next. Let $\phi_n\/$ be the
 numerical solution at time $t_n = n\,\Delta t\/$, and consider a fixed
 time interval $0 \leq t_n \leq T\/$ --- for some fixed initial condition.
 Then:
 \begin{center}
   $\norm{\phi_n^{(\ell)}}_{\infty}\/$ and $\norm{\phi_n^{(k+1)}}_2\/$
   remain bounded as $h \to 0\/$, where $0 \leq \ell \leq k\/$.
 \end{center}
 Notation: here $\norm{\cdot}_p\/$ is the $L^p\/$ norm in $\Omega\/$, and
 $f^{(\ell)}\/$ denotes the $\ell^{th}\/$ derivative of a function $f\/$.
\end{theorem}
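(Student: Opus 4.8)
The plan is to exploit the special structure of the one-dimensional constant-coefficient problem: the advection operator $S_{t+\Delta t,t}$ is just a rigid shift by $v\,\Delta t$, and since we use periodic boundary conditions this shift commutes with everything. The scheme is therefore $\phi_{n+1} = P_k^+\,T_{v\Delta t}\,\phi_n$, where $T_c$ denotes translation by $c$. The first step is to bound the $L^2$ norm of the $(k+1)^{st}$ derivative. By Theorem~\ref{thm:minimize:seminorm} (with $p=1$, so the minimizer is actually unique), the map $\phiSP \mapsto \mathcal{H}_{\phiSP}$ does not increase $\mathcal{F}[\phiSP] = \int_\Omega (\phiSP^{(k+1)})^2\,\mathrm{d}x = \norm{\phiSP^{(k+1)}}_2^2$; translation obviously preserves $\mathcal{F}$. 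Hence $\norm{\phi_{n+1}^{(k+1)}}_2 \le \norm{\phi_n^{(k+1)}}_2 \le \dots \le \norm{\phiIC^{(k+1)}}_2$, which is finite and $h$-independent. This already gives the $\norm{\phi_n^{(k+1)}}_2$ part of the claim, with a bound uniform in $n$ and $h$ (not merely on a fixed time interval).

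The second step is to control the lower-order sup-norms $\norm{\phi_n^{(\ell)}}_\infty$ for $0 \le \ell \le k$. Here I would argue cell by cell. Fix a grid cell $[x_m, x_{m+1}]$ of length $h$. On this cell $\phi_n$ is a degree-$n$ polynomial determined by the Hermite data $\{\phi_n(x_j), \dots, \phi_n^{(k)}(x_j)\}_{j=m,m+1}$. The idea is to express $\phi_n^{(\ell)}$ on the cell in terms of (a) the function value, or better the cell average $\bar\phi_{n,m} := \frac1h\int_{x_m}^{x_{m+1}}\phi_n\,\mathrm{d}x$, and (b) derivative differences that are controlled by $\norm{\phi_n^{(k+1)}}_2$. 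Concretely: for $1 \le \ell \le k$, the quantities $h^\ell \phi_n^{(\ell)}$ at a scale-$h$ cell can be written via Taylor-with-integral-remainder plus telescoping as bounded combinations of $h^{k+1}\times(\text{averages of }\phi_n^{(k+1)})$, whose sup norm over the cell is $\le C\,h^{k+1-1/2}\norm{\phi_n^{(k+1)}}_{2}$ by Cauchy–Schwarz on the interval, hence $\to 0$. More carefully, one shows by a scaling argument (rescale the cell to $[0,1]$) that $\norm{\phi_n^{(\ell)}}_{L^\infty[x_m,x_{m+1}]} \le C\big(h^{-\ell-1/2}\norm{\phi_n - \bar\phi_{n,m}}_{L^2[x_m,x_{m+1}]} + \text{(derivative data)}\big)$ using equivalence of norms on the finite-dimensional polynomial space; and then the derivative data and the deviation from the average are both controlled by $\norm{\phi_n^{(k+1)}}_2$ via the zero-mean Poincaré-type inequality iterated $k+1$ times. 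This reduces everything to bounding the averages.

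The third and final step bounds the cell averages, and this is where Theorem~\ref{thm:stab:averageh} enters. Translation $T_{v\Delta t}$ merely permutes/shifts the cell-average vector up to an $O(h)$ redistribution, and in any case preserves $\int_\Omega \phi_n\,\mathrm{d}x$ (total mass) exactly since the domain is periodic. Applying the theorem, $\int_a^b (P_k^+ T_{v\Delta t}\phi_n)\,\mathrm{d}x = \int_a^b T_{v\Delta t}\phi_n\,\mathrm{d}x - h^{k+1}\int_a^b E_k(\tfrac{x-a}{h})\,(T_{v\Delta t}\phi_n)^{(k+1)}(x)\,\mathrm{d}x$. The first term equals $\int_a^b\phi_n\,\mathrm{d}x$; the correction term is bounded in absolute value by $h^{k+1}\norm{E_k}_2\,\norm{\phi_n^{(k+1)}}_2 = O(h^{k+1})$ uniformly, using the step-one bound. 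Since there are at most $T/\Delta t = O(h^{-(k+1)})$ (by the hypothesis $\Delta t \ge O(h^{k+1})$!) steps up to time $T$, the total drift in $\int_\Omega\phi_n$ is $O(h^{-(k+1)})\cdot O(h^{k+1}) = O(1)$, hence $\int_\Omega\phi_n\,\mathrm{d}x$ stays bounded. To get individual cell averages bounded (not just the total), I would additionally track a discrete $L^2$-in-space bound on the average vector: the projection and translation are "almost" $\ell^2$-contractions on cell averages up to the same $O(h^{k+1})$ per-step error, so $\|\bar\phi_n\|_{\ell^2}$ grows by at most $O(1)$ over $[0,T]$; combined with the $\norm{\phi_n^{(k+1)}}_2$ bound controlling cell-to-cell variation, this yields a uniform $\ell^\infty$ bound on the averages, and then steps two assembles the $\norm{\phi_n^{(\ell)}}_\infty$ bounds.

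The main obstacle I anticipate is precisely this last point — propagating control of the \emph{individual} cell averages, as opposed to the global integral. The stability functional $\mathcal{F}$ is blind to the zeroth and low-order modes (it is only a seminorm, vanishing on low-degree polynomials), so it gives no direct handle on averages; Theorem~\ref{thm:stab:averageh} controls their \emph{sum} but the naive estimate for a single cell's average after a projection could a priori lose a factor. The resolution is that on the shift-invariant periodic problem the low-frequency behavior is essentially exact — the scheme restricted to the "coarse" part is a small perturbation of pure translation — so one should set up a two-scale energy estimate: $\mathcal{F}$ controls the high modes with genuine contraction, and a separate $\ell^2$ energy estimate for the average vector controls the low modes with a benign $(1+O(h^{k+1}))$ amplification factor per step, giving boundedness on $[0,T]$. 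Checking that these two estimates close consistently (that the coupling terms are higher order) is the delicate bookkeeping, but it is the natural route and is what makes the hypothesis $\Delta t \ge O(h^{k+1})$ — rather than the usual CFL-type upper bound — exactly the right condition.
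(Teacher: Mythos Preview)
Your first step (boundedness of $\norm{\phi_n^{(k+1)}}_2$ via Theorem~\ref{thm:minimize:seminorm} and translation invariance) and your use of Theorem~\ref{thm:stab:averageh} to control the \emph{global} integral $\int_a^b\phi_n\,\mathrm{d}x$ match the paper exactly. The divergence is in your second and third steps, where you try to bound $\norm{\phi_n^{(\ell)}}_\infty$ cell by cell. That local approach is where the argument goes wrong: the claim that ``$h^\ell\phi_n^{(\ell)}$ can be written \dots\ as bounded combinations of $h^{k+1}\times(\text{averages of }\phi_n^{(k+1)})$'' is false without constants of integration, and those constants are precisely the grid-point values $\phi_n^{(\ell)}(x_m),\dots,\phi_n^{(k)}(x_m)$ you are trying to bound. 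Your ``(derivative data)'' term hides exactly this circularity. The self-identified obstacle --- controlling \emph{individual} cell averages --- is then an artificial difficulty generated by the local viewpoint, and the proposed two-scale $\ell^2$ energy estimate is both vague and unnecessary.

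The paper sidesteps all of this by working globally. For $1\le\ell\le k$ the function $\phi_n^{(\ell)}$ is periodic (the interpolant is $C^k$) and hence has zero mean, so the representation
\[
g(x)=\bar g+\int_a^b G(x-y)\,g'(y)\,\mathrm{d}y,
\qquad G\ \text{the periodic Green's function for }\tfrac{d}{dx},
\]
together with Cauchy--Schwarz, gives $\norm{\phi_n^{(k)}}_\infty\le C\norm{\phi_n^{(k+1)}}_2$ directly --- no cell averages needed. One then iterates downward: $\norm{\phi_n^{(k-1)}}_\infty\le C\norm{\phi_n^{(k)}}_2$, etc. Only at $\ell=0$ does the mean $\bar\phi_n=(b-a)^{-1}\int_a^b\phi_n$ enter, and that is exactly what Theorem~\ref{thm:stab:averageh} plus the hypothesis $\Delta t\ge O(h^{k+1})$ bounds. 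So the missing idea is simply: exploit periodicity globally (zero mean of derivatives) rather than fighting cell by cell.
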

%
\begin{proof}
 For any Runge-Kutta scheme (and many other types of ODE solvers),
 the approximate advection operator is given by the shift operator
 \begin{equation}\label{thm:stab:1d_cc:eqn1}
  (A\,\phi)(x\/,\,t) = \phi(x - v\,\Delta t\/,\,t)\/,
 \end{equation}
 where we use the notation $A = A_{t+\Delta t\/,\,t}\/$ --- since
 $A_{t+\Delta t\/,\,t}\/$ is independent of time. Let the numerical solution
 at time $t = t_n = n\,\Delta t\/$ be denoted by $\phi_n\/$. Note that
 $\phi_n = (P_k^+ \circ A)^n \,\phiIC \in C^k\/$.

 From Theorem~\ref{thm:minimize:seminorm},
 Equation~\ref{thm:stab:1d_cc:eqn1}, and
 $\phi_{n+1} = (P_k^+ \circ A) \, \phi_n\/$ it follows that
 \begin{equation}\label{thm:stab:1d_cc:eqn2}
  \norm{\phi_{n+1}^{(k+1)}}_2 \leq \norm{\phi_n^{(k+1)}}_2
  \quad \Longrightarrow \quad
  \norm{\phi_{n}^{(k+1)}}_2 \leq \norm{\phiIC^{(k+1)}}_2\/.
 \end{equation}
 Furthermore, from Theorem~\ref{thm:stab:averageh}
 \begin{equation*}
  \mathcal{M}(\phi_{n+1}) = \mathcal{M}(\phi_n) - h^{k+1}\,
  \int_a^b E_k\prn{\frac{x-a}{h}}\,\phi_n^{(k+1)}(x-v\,\Delta t)\/\ud{x}\/,
 \end{equation*}
 where $\mathcal{M}\/$ denotes the integral from $a\/$ to $b\/$. From
 this, using the Cauchy-Schwarz inequality, we obtain the estimate
 \begin{equation}\label{thm:stab:1d_cc:eqn3}
  \abs{\mathcal{M}(\phi_{n+1})} \leq \abs{\mathcal{M}(\phi_n)} +
  h^{k+1}\,C\,\norm{\phi_n^{(k+1)}}_2\/,
 \end{equation}
 where
 \begin{equation*}
  C^2 = \int_a^b \prn{E_k\prn{\frac{x-a}{h}}}^2\/\ud{x}
      = (b-a) \int_0^1 \prn{E_k(x)}^2\/\ud{x} =
              \frac{b-a}{n+2}\,\prn{\frac{(k+1)!}{(n+1)!}}^2\/.
 \end{equation*}
 From \eqref{thm:stab:1d_cc:eqn3}, using
 \eqref{thm:stab:1d_cc:eqn2}, it follows that
 \begin{equation}\label{thm:stab:1d_cc:eqn4}
  \abs{\mathcal{M}(\phi_{n})} \leq \abs{\mathcal{M}(\phiIC)} +
  \frac{t_n}{\Delta t}\,h^{k+1}\,C\,\norm{\phiIC^{(k+1)}}_2\/.
 \end{equation}
 Now, for any of the (periodic) functions $g = \phi_n^{(\ell)}\/$ ---
 $0 \leq \ell \leq k\/$, we can write
 \begin{equation}\label{thm:stab:1d_cc:eqn5}
  g(x) = \overline{g} + \int_a^b G(x-y)\,g^{(1)}(y)\/\ud{y}\/,
 \end{equation}
 where $\overline{g}\/$ is the average value for $g\/$ and $G\/$ is the
 Green's function defined by: $G\/$ is periodic of period $b-a\/$, and
 $G(x) = \frac{1}{2} - \frac{x}{b-a}\/$ for $0 < x < b-a\/$. Note that
 $\overline{g} = 0\/$ for $1 \leq \ell \leq k\/$, and
 $\overline{g} = \frac{1}{b-a}\,\mathcal{M}(\phi_n)\/$ for $\ell = 0\/$.

 Now use \eqref{thm:stab:1d_cc:eqn2}, \eqref{thm:stab:1d_cc:eqn5},
 and the Cauchy-Schwarz inequality, to find an $h$-independent bound for
 $\norm{\phi_n^{(k)}}_{\infty}\/$ --- hence also for
 $\norm{\phi_n^{(k)}}_2\/$. Repeat the process for $\phi_n^{(k-1)}$, and
 so on --- all the way down to $\phi_n^{(0)}\/$. In the last step,
 \eqref{thm:stab:1d_cc:eqn4} is needed.
\end{proof}
%
Notice that Theorem~\ref{thm:stab:1d_cc} says nothing about the
$L^{\infty}\/$ norm of $\phi_n^{(k+1)}\/$. A natural question is then:
what can we say about $\phi_n^{(k+1)}\/$ --- beyond the statement in
Equation~\eqref{thm:stab:1d_cc:eqn2}? In particular, notice that there
are stricter bounds on the growth of derivatives than the one given by
Theorem~\ref{thm:minimize:seminorm} , since $\mathcal{F}$ is actually
minimized in each cell individually. Can these, as well results similar
to the one in Theorem~\ref{thm:stab:averageh}, be exploited to obtain
more detailed information about the behavior of the solutions (and
derivatives) given by jet schemes? This is something that we plan to
explore in future work.
%
\subsubsection{Stability for 1-D variable advection}
\label{subsubsec:stab:1D:var:advection}
%
An extension of this proof to the case of variable advection will be
considered in future work.
%
\subsubsection{Stability for higher dimensions and constant coefficients}
\label{subsubsec:stability:multiD:const:coeff}
%
In several space dimensions, assume that the advection velocity does not
involve rotation --- specifically: the advected grid hyperplanes remain
parallel to the coordinate hyperplanes. Then all the derivatives needed to
compute the stability functional $\mathcal{F}\/$ (as well as all the
derivatives needed in the proof of Theorem~\ref{thm:minimize:seminorm})
remain defined after advection. Thus Theorem~\ref{thm:minimize:seminorm}
can be used, as in the proof of Theorem~\ref{thm:stab:1d_cc}, to control
the growth of $\norm{\partial^{\vec{\beta}}\,\phi}_2\/$ --- where
$\vec{\beta}\/$ is as in Equation~\eqref{eq:functional}. In particular,
when the advection velocity is constant,
\begin{equation}\label{eqn:stab:manyD}
 \norm{\partial^{\vec{\beta}}\,\phi}_2 \leq
 \norm{\partial^{\vec{\beta}}\,\phiIC}_2
\end{equation}
follows. Just as in the one dimensional case, \eqref{eqn:stab:manyD}
alone is not enough to guarantee stability. For example: in the case with
periodic boundary conditions, control over the growth of
$\partial^{\vec{\beta}}\,\phi\/$ still leaves the possibility of unchecked
growth of partial means, i.e. components of the solution of the form
$\phi = \sum f_j(\vec{x})\/$, where $f_j\/$ does not depend on the
variable $x_j\/$. However, the Hermite interpolant for any such component
reduces to the Hermite interpolant of a lower dimension, to which a lower
dimensional version of the stability functional $\mathcal{F}\/$ applies
--- thus bounding their growth. The appropriate generalization of
Theorem~\ref{thm:stab:averageh} to control the partial means of $\phi$
is the subject of current research.
%
\subsubsection{Stability for higher dimensions and variable coefficients}
\label{subsubsec:stability:multiD:var:coeff}
%
In the presence of rotation, Theorem~\ref{thm:minimize:seminorm} fails.
Then, after advection, the derivatives involved are only defined in the
sense of Definition~\ref{def:define:pointwise}, and the integrations by
parts used to prove Theorem~\ref{thm:minimize:seminorm} are no longer
justified. On the other hand (see the discussion at the beginning of
\S~\ref{subsec:projections_function_spaces}), the jumps in the derivatives
that cause the failure are small. Hence we conjecture that, in this case,
a ``corrected'' version of Theorem~\ref{thm:minimize:seminorm} will state
that $\mathcal{F}\left[\mathcal{H}_{\phi}\right] \leq \mathcal{F}[\phi]
+ C(h)\/$, where $C(h)\/$ is a small correction that depends on the
smallness of the jumps.
In this case a simultaneous proof of stability and convergence might be
possible --- simultaneous because the ``small jumps'' property is valid
only as long as the numerical solution is close to the actual solution.

\section{Numerical Results}
\label{sec:numerical_results}
%
As a test for both the accuracy and the performance of jet schemes, we
consider a version of the classical ``vortex in a box'' flow
\cite{BellColellaGlaz1989,LeVeque1996}, adapted as follows. On the
computational domain $(x\/,\,y) \in [0,1]^2\/$, and for
$t \in [0\/,\,t_\text{final}]\/$, we consider the linear advection
equation~\eqref{eq:linear_advection} with the velocity field
\begin{equation} \label{eq:vortex_in_box_velocity_field}
  \vec{v}(x\/,\,y\/,\,t) = \cos\prn{\tfrac{\pi t}{T}}
  \begin{pmatrix}
     \phantom{-}\sin^2(\pi x)\,\sin(2\pi y) \\
               -\sin(2\pi x) \,\sin^2(\pi y)
  \end{pmatrix}.
\end{equation}
%
\begin{figure}
\begin{minipage}[b]{.80\textwidth}
\centering
\includegraphics[width=.62\textwidth]{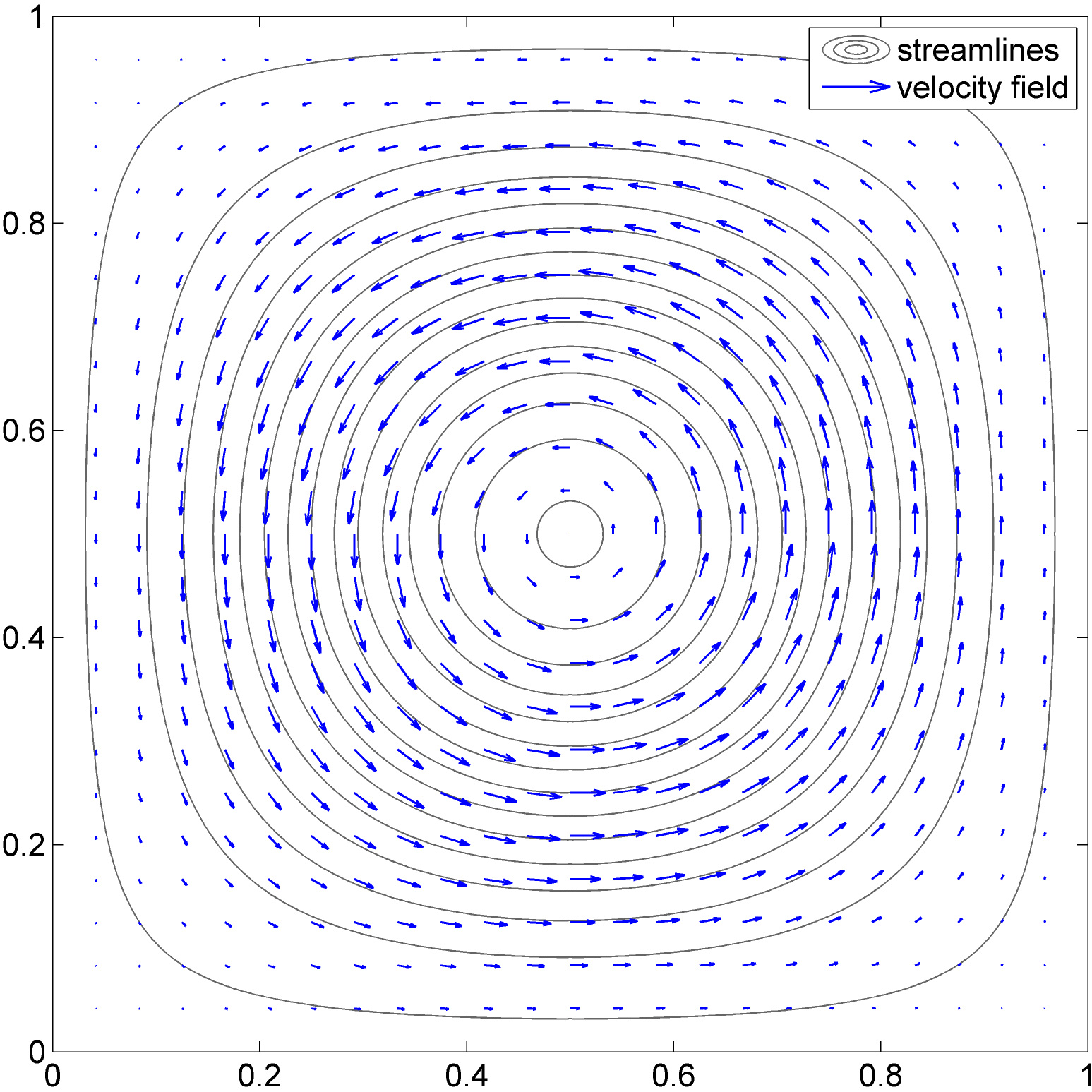}
\caption{Streamlines and quiver plot of the velocity field used for the test cases.}
\label{fig:swirl_quiver}
\end{minipage}
\end{figure}
This velocity field at $t=0$ is shown in Figure~\ref{fig:swirl_quiver}.
The maximum speed that ever occurs is $1\/$. We prescribe periodic
boundary conditions on all sides of the computational domain, and
provide periodic initial conditions --- note that the velocity field
\eqref{eq:vortex_in_box_velocity_field} is $C^\infty\/$ everywhere when
repeated periodically beyond $[0\/,\,1]^2\/$. This test is a mathematical
analog of the famous ``unmixing'' experiment, presented by Heller
\cite{Heller1960}, and popularized by Taylor \cite{FluidMixingMovie}. It
models the passive advection of a solute concentration by an
incompressible fluid motion, on a time scale where diffusion can be
neglected. The velocity field swirls the concentration forth and back
(possibly multiple times) around the center point
$(\frac{1}{2}\/,\,\frac{1}{2})\/$ in such a fashion that the
equi-concentration contours of the solution become highly elongated at
maximum stretching. The parameters $T\/$ and $t_\text{final}\/$ determine
the amount of maximum deformation and the number of swirls. In all tests,
we choose $t_\text{final} = \ell\,T\/$, where $\ell \in \mathbb{N}$. This
implies that $\phi(x,y,t_\text{final}) = \phi(x\/,\,y\/,\,0)\/$,
i.e.\ at the end of each computation, the solution returns to its initial
state. Below, we compare the accuracy and convergence rates of jet
schemes with WENO methods (\S~\ref{subsec:numerics_accuracy}),
investigate the accuracy and stability of different versions of jet
schemes (\S~\ref{subsec:numerics_stability}), demonstrate the
performance of jet schemes on a benchmark test
(\S~\ref{subsec:numerics_performancs}), and finally compare the
computational cost and efficiency of all the numerical schemes
considered (\S~\ref{subsec:numerics_cost}).

\subsection{Test of Accuracy and Convergence Rate}
\label{subsec:numerics_accuracy}
We first test the relative accuracy and convergence rate of jet schemes
in comparison with classical WENO approaches. For this test, we choose
$t_\text{final} = T = 1\/$ and initial conditions
$\phi(x\/,\,y\/,\,0) = \cos(2\pi x)\cos(4\pi y)\/$. This choice of
parameters yields a moderate deformation, and the smallest structures in
the solution are well resolved for $h\le\sfrac{1}{50}$. In all error
convergence tests, the error with respect to the true solution at
$t_\text{final}\/$ is measured in the $L^\infty\/$ norm.

The results of the numerical error analysis are shown in
Figure~\ref{fig:convergence_1}. The jet schemes considered here are the
ones based on the projection $P_k^+\/$, given in
Definition~\ref{def:projection_partial}. Specifically, we consider a
bi-linear scheme ($k=0\/$), denoted by black dots; a bi-cubic scheme
($k=1\/$), implemented using analytical differentiation for the partial
$1$-jet, as described in \S~\ref{subsubsec:Analytical_Diff}, marked by
black squares; and the bi-quintic scheme ($k=2\/$) described in the
second bullet point in Example~\ref{ex:epsilon:FD}, denoted by black
triangles.

As reference schemes, we consider the classical WENO finite difference schemes described in \cite{JiangShu1996}. We use schemes of orders $1\/$, $3\/$, and $5\/$, constructed as follows. Unless otherwise noted, time stepping is done with the maximum time step that stability admits. The first order version is the simple 2-D upwind scheme, using forward Euler in time (with a time step $\Delta t = \frac{1}{\sqrt{2}}h\/$). The time stepping of the third order WENO is done using the Shu-Osher scheme \cite{ShuOsher1988} (with $\Delta t = h\/$). Lacking a simple fifth order SSP Runge-Kutta scheme \cite{Gottlieb2005,GottliebKetchesonShu2009}, the fifth order WENO is advanced forward in time using the same third order Shu-Osher scheme, however, with a time step $\Delta t = h^\frac{5}{3}\/$. This yields a globally order $O(h^5)\/$ scheme. For both WENO3 and WENO5, the parameters $\varepsilon = 10^{-6}\/$ and $p = 2$\/ (in the notation of \cite{JiangShu1996}) are used. These choices are commonly employed when WENO is used in a black-box fashion (i.e.\ without adapting the parameters to the specific solution). In Figure~\ref{fig:convergence_1}, the numerical errors incurred with the WENO schemes are shown by gray curves and symbols, namely: dots for upwind; squares for WENO3; and triangles for WENO5.

This test demonstrates the potential of jet schemes in a remarkable
fashion. While the first order jet scheme is very similar to the 2-D
upwind method, the jet schemes of orders $3\/$ and $5\/$ are strikingly
more accurate than their WENO counterparts. For equal resolution $h\/$,
the errors for the bi-cubic jet scheme are about 100 times smaller than
with WENO3, and for the bi-quintic jet scheme, the errors are about
1000 times smaller than with WENO5. This implies that a high order jet
scheme achieves the same accuracy as a WENO scheme (of the same order)
with a resolution that is about $4\/$ times as coarse. This reflects
the fact that jet schemes possess subgrid resolution, i.e.\ structures
of size less than $h\/$ can be represented due to the high degree
polynomial interpolation \cite{NaveRosalesSeibold2010}.

\subsection{Accuracy and Stability of Grid-Based Finite Difference
Schemes} \label{subsec:numerics_stability}
In a second test, the accuracy and stability of schemes that use grid-based
finite difference reconstructions of the higher derivatives are tested. As
in the test described in \S~\ref{subsec:numerics_accuracy}, we choose the
deformation $T=1\/$, however, now $t_\text{final} = 20\/$, i.e.\ the
solution is moved back and forth multiple times. Through this approach,
enough time steps are taken so that unstable approaches in fact show
their instabilities.

We compare jet schemes of order 3 ($k=1\/$) and order 5 ($k=2\/$), each in
two versions: first, we consider the approaches based on the projection
$P_k^+\/$ (see Definition~\ref{def:projection_partial}), to which the
stability arguments in \S~\ref{subsec:adv:funspaces} apply. Second, we
implement schemes that construct the partial $k$-jet from the total
$k$-jet, using optimally local grid-based finite difference
approximations. As described in \S~\ref{subsec:optimally:local}, these
types of schemes are generally less costly than approaches based on
$P_k^+\/$, and therefore worth investigating (see also
\S~\ref{subsec:numerics_cost}).

Specifically, we consider a bi-cubic scheme that tracks the total
$1$-jet, and uses the grid-based finite difference approximation of
$\phi_{xy}\/$ described in Example~\ref{ex:optimal:local:approx}.
Furthermore, we implement a bi-quintic scheme that tracks the total
$2$-jet, and approximates $\phi_{xxy}\/$, $\phi_{xyy}\/$, and
$\phi_{xxyy}\/$ using grid-based finite differences, as follows. In a
cell $Q = [0\/,\,h]^2$, with the index $\vec{q}\in\{0\/,\,1\}^2$
corresponding to the vertex $\vec{x}_{\vec{q}} = \vec{q}\,h\/$, the
derivatives at the vertex $(0,0)$ are approximated by:
\begin{align*}
  \phi_{xxy}^{(0,0)} = & \;
    \tfrac{1}{h^2}\/\left( - \phi_x^{(0,0)} +
    \phi_x^{(1,0)} + \phi_x^{(0,1)} - \phi_x^{(1,1)} \right) +
    \tfrac{6}{h^2}\/\left( - \phi_y^{(0,0)} + \phi_y^{(1,0)} \right)
    \\ + & \;
    \tfrac{1}{2h}\/\left( - \phi_{xx}^{(0,0)} - \phi_{xx}^{(1,0)} +
    \phi_{xx}^{(0,1)} + \phi_{xx}^{(1,1)} \right) +
    \tfrac{1}{h}\/\left( - 4\, \phi_{xy}^{(0,0)} - 2\,
    \phi_{xy}^{(1,0)} \right)\/,
  \\
  \phi_{xyy}^{(0,0)} = & \;
    \tfrac{6}{h^2}\/\left( - \phi_x^{(0,0)} + \phi_x^{(0,1)} \right) +
    \tfrac{1}{h^2}\/\left( - \phi_y^{(0,0)} + \phi_y^{(1,0)} +
    \phi_y^{(0,1)} - \phi_y^{(1,1)} \right)
    \\ + & \;
    \tfrac{1}{h}\/\left( - 4\,\phi_{xy}^{(0,0)} - 2\,\phi_{xy}^{(0,1)}
    \right) + \tfrac{1}{2h}\/\left( -\phi_{yy}^{(0,0)} +
    \phi_{yy}^{(1,0)} - \phi_{yy}^{(0,1)} + \phi_{yy}^{(1,1)} \right)\/,
  \\
  \phi_{xxyy}^{(0,0)} = & \;
    \tfrac{6}{h^3}\/\left( \phi_x^{(0,0)} - \phi_x^{(1,0)} -
    \phi_x^{(0,1)} + \phi_x^{(1,1)}\right) +
    \tfrac{6}{h^3}\/\left( \phi_y^{(0,0)} - \phi_y^{(1,0)} -
    \phi_y^{(0,1)} + \phi_y^{(1,1)} \right)
    \\ + &
    \tfrac{1}{h^2}\/\left( 7\,\phi_{xy}^{(0,0)} - \phi_{xy}^{(1,0)} -
    \phi_{xy}^{(0,1)} - 5\,\phi_{xy}^{(1,1)} \right)\/.
\end{align*}
It can be verified by Taylor expansion that these are $O(h^{6-s})\/$
accurate approximations to the respective derivatives of order $s\/$.
The corresponding approximations at the vertices $(1,0)\/$,
$(0,1)\/$ and $(1,1)\/$ are obtained by symmetry.

The convergence errors for these jet schemes are shown in
Figure~\ref{fig:convergence_2}. As already observed in
\S~\ref{subsec:numerics_accuracy}, the bi-cubic (shown by black squares)
and bi-quintic (denoted by black triangles) jet schemes that are based on
tracking the partial $k$-jet are stable and $(2k+1)^{st}\/$ order accurate.
The bi-cubic jet scheme that approximates $\phi_{xy}\/$ by grid-based
finite differences (indicated by open squares) is stable and third order
accurate as well, albeit with a larger error constant than the ``pure''
version based on $P_1^+\/$. In contrast, the bi-quintic jet scheme that
approximates $\phi_{xxy}\/$, $\phi_{xyy}\/$, and $\phi_{xxyy}\/$ by
grid-based finite differences (denoted by open triangles) is unstable.
Interestingly, the instability is extremely mild: even though many time
steps are taken in this test, the instability only shows up for
$h<0.008\/$. This phenomenon is both alarming and promising. Alarming,
because it demonstrates that with jet schemes that are not based on the
partial $k$-jet, stability is not assured, and generalizations of the
stability results given in \S~\ref{subsec:adv:funspaces} are needed.
Promising, because it is plausible that these jet schemes could be
stabilized without significantly having to diminish their accuracy.

\begin{figure}[p]
  \begin{minipage}[b]{.56\textwidth}
    \hspace{-.4em}\includegraphics[width=1.02\textwidth]{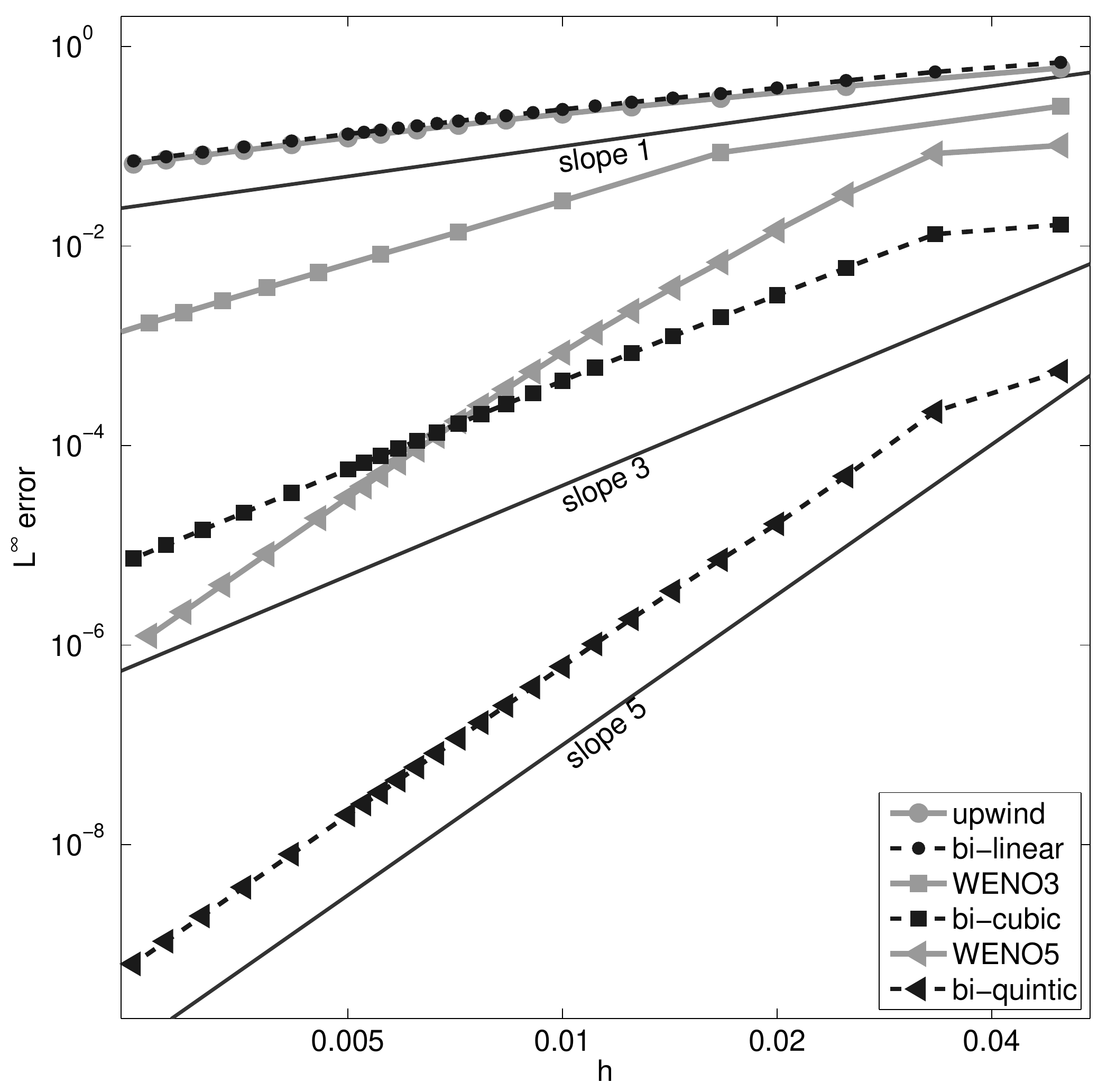}
    \vspace{-1.6em}
    \caption{Numerical convergence rates for jet schemes of orders
             $1\/$, $3\/$, and $5\/$, in comparison with WENO schemes
             of the same orders.}
    \label{fig:convergence_1}

    \vspace{2em}
    \hspace{-.4em}\includegraphics[width=1.02\textwidth]{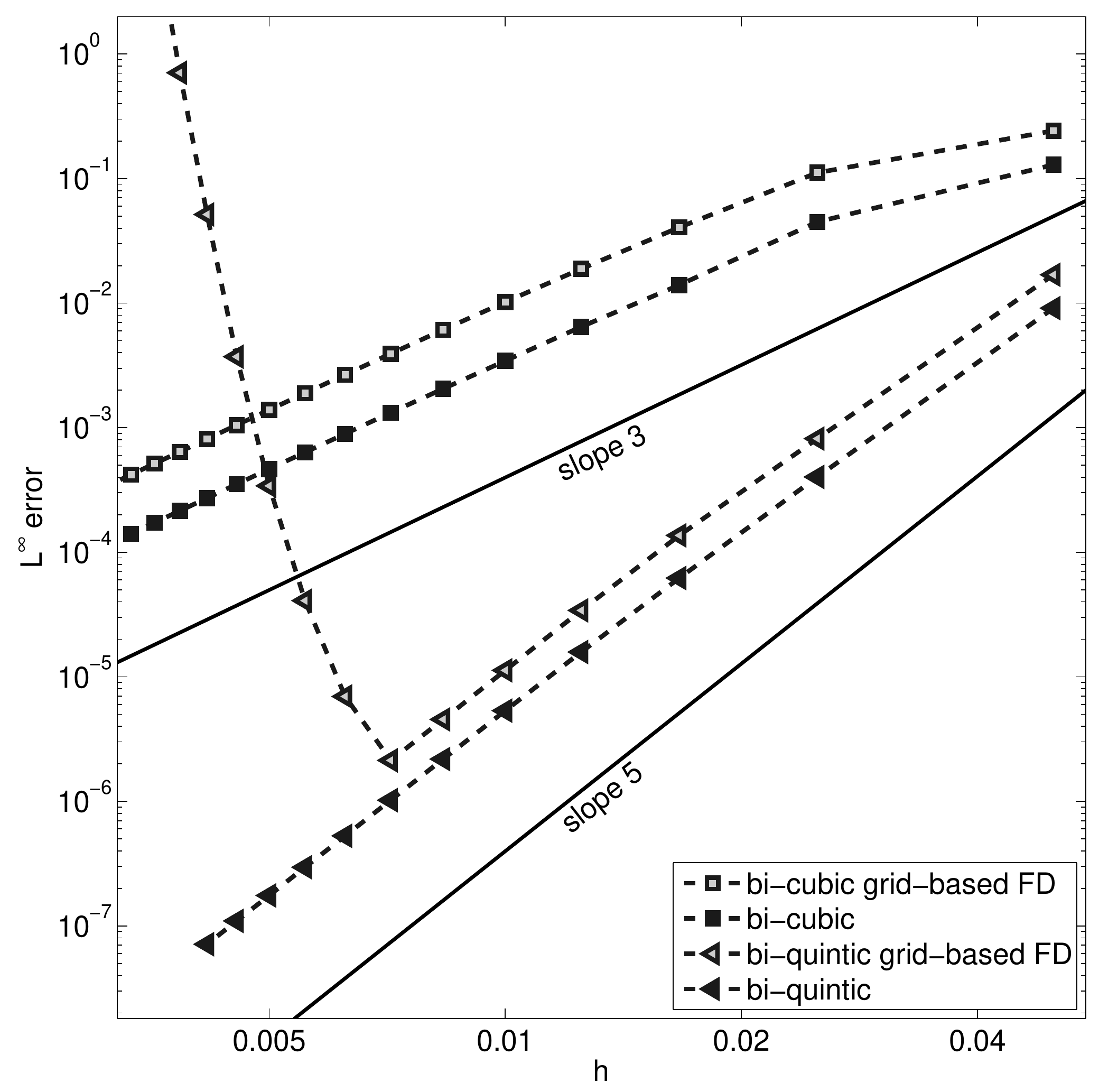}
    \vspace{-1.6em}
    \caption{Numerical convergence rates for jet schemes of orders $3\/$
           and $5\/$, comparing schemes based on the full partial jet
           with schemes using grid-based finite differences. The latter
           type of approach turns out unstable for the order $5\/$
           jet scheme.}
    \label{fig:convergence_2}
    \vspace{1.2em}
  \end{minipage}
 \hfill
  \begin{minipage}[b]{.40\textwidth}
    \includegraphics[width=\textwidth]{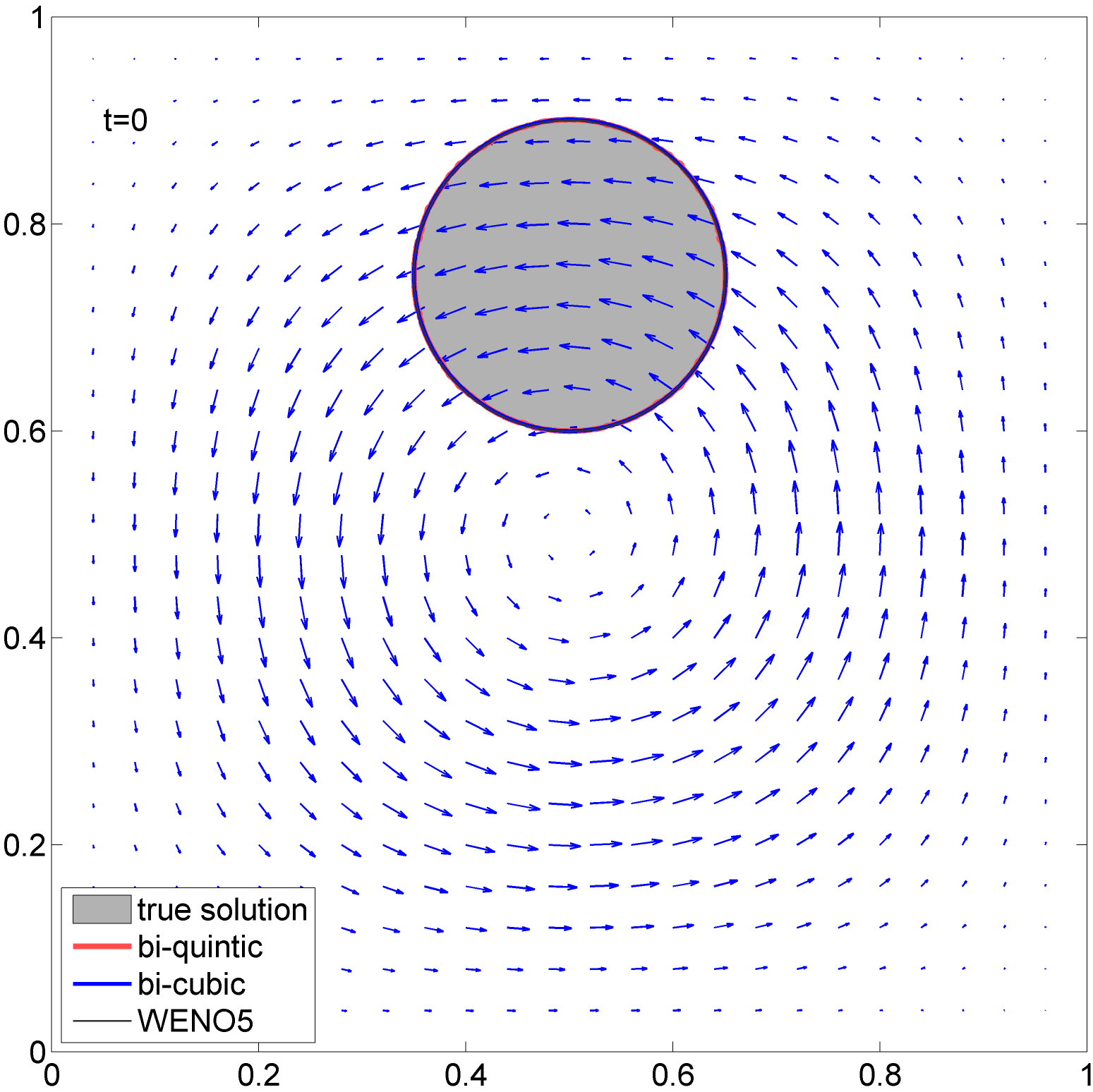}
    \vspace{-1.2em}
    \caption{Swirl test: velocity field and initial conditions.}
    \label{fig:swirl_t00}

    \vspace{1.1em}
    \includegraphics[width=\textwidth]{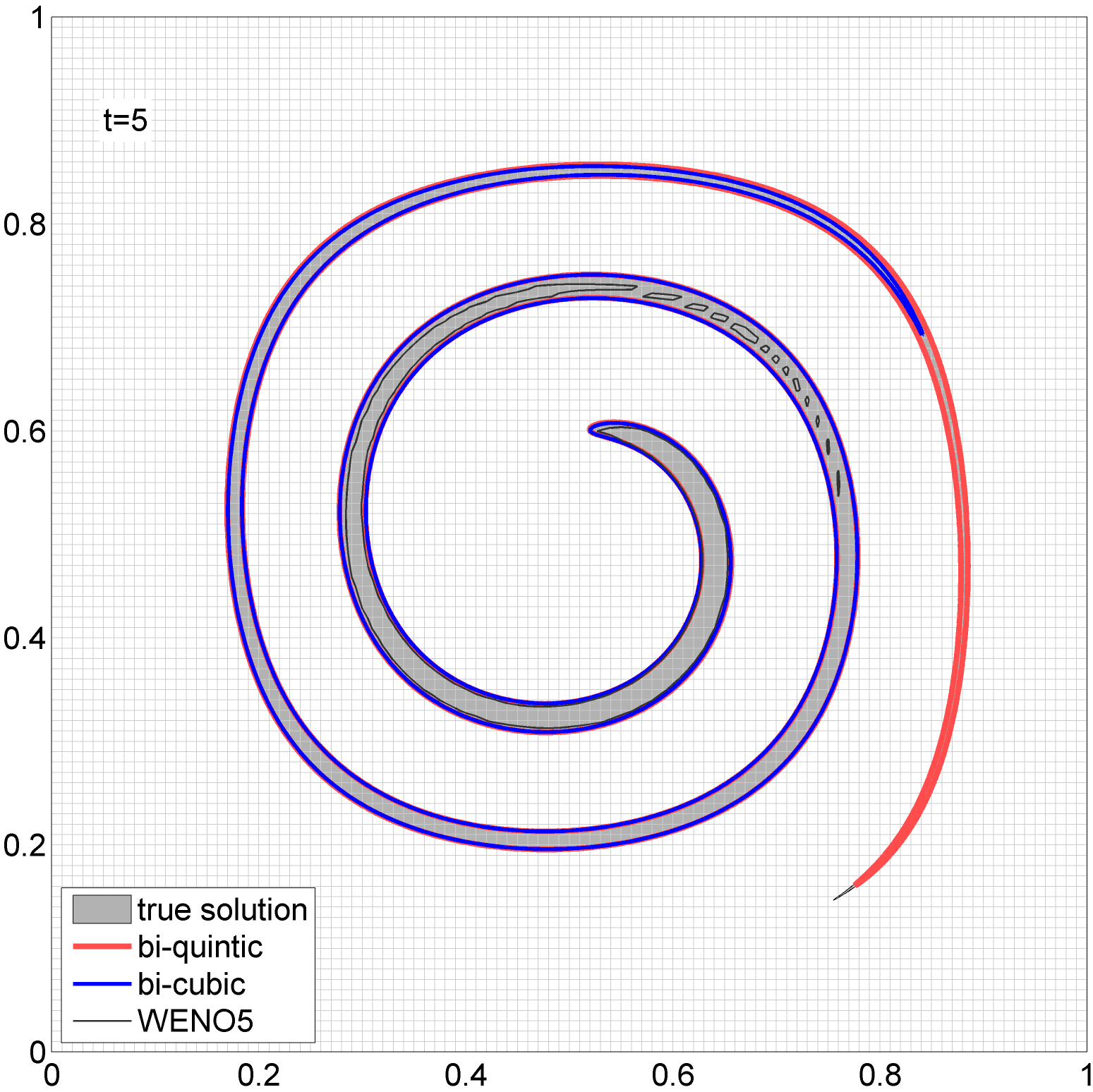}
    \vspace{-1.2em}
    \caption{Swirl test: $\phi=\phi_c\/$ contour at maximum deformation
             ($t=\frac{T}{2}\/$).}
    \label{fig:swirl_t05}

    \vspace{1.1em}
    \includegraphics[width=\textwidth]{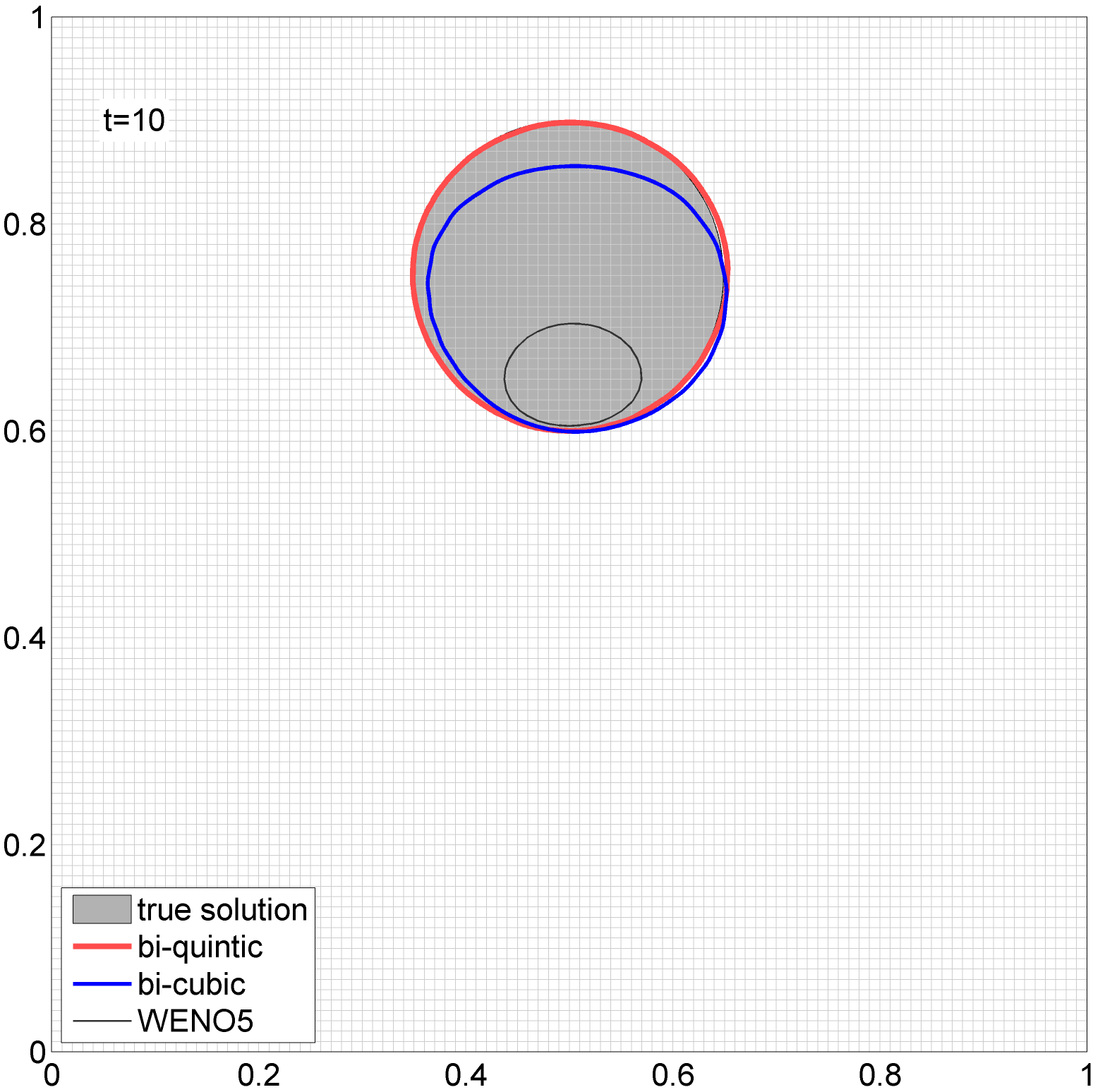}
    \vspace{-1.2em}
    \caption{Swirl test: $\phi=\phi_c\/$ contour at the final time
             ($t=T\/$).}
    \label{fig:swirl_t10}
  \end{minipage}
\end{figure}

\subsection{Test of Performance on a Level-Set-Type Example}
\label{subsec:numerics_performancs}
In this test we assess the practical accuracy of the considered numerical
approaches, by following a curve of equi-concentration of the solution.
The parameters are now $t_\text{final} = T = 10\/$, and the initial
conditions are given by a periodic Gaussian hump:
$\phi(x\/,\,y\/,\,0) = \sum_{i\/,\,j\in\mathbb{Z}} g(x-i\/,\,y-j)\/$,
where $g(x\/,\,y) = \exp(-10((x-x_0)^2+(y-y_0)^2))\/$ with
$x_0 = 0.5\/$ and $y_0 = 0.75\/$. We examine the time-evolution of the
contour
$\Gamma(t) = \{(x\/,\,y)\in\Omega:\phi(x\/,\,y\/,\,t)=\phi_c\}\/$ that
corresponds to the concentration $\phi_c = \exp(-10\,r^2)\/$, where
$r = 0.15\/$. The initial (and final) contour $\Gamma(0)\/$ is almost an
exact circle of radius $r\/$, centered at $(x_0\/,\,y_0)\/$. At maximum
deformation, $\Gamma(\tfrac{T}{2})\/$ is highly elongated.

This test is well-known in the area of level set approaches
\cite{OsherSethian1988}. For these, only one specific contour is of
interest, and it is common practice to modify the other contours, e.g.\
by adding a reinitialization equation \cite{SussmanSmerekaOsher1994}, or
by modifying the velocity field away from the contour of interest, using
extension velocities \cite{AdalsteinssonSethian1999}. Since here the
interest lies on more general advection problems
\eqref{eq:linear_advection}, such as the convection of concentration
fields, no level-set-method-specific modifications are considered.
However, this does not mean that these procedures cannot be applied in
the context of jet schemes. In fact, proper combinations of jet schemes
with reinitialization are the subject of current research.

We apply the bi-cubic and the bi-quintic jet schemes, as well as the
WENO5 scheme (all as described in \S~\ref{subsec:numerics_accuracy}), on
a grid of resolution $h=\sfrac{1}{100}\/$. The resulting equi-concentration
contours are shown in Figure~\ref{fig:swirl_t00} (initial conditions),
Figure~\ref{fig:swirl_t05} (maximum deformation), and
Figure~\ref{fig:swirl_t10} (final state). The contour obtained with WENO5
is thin and black, the contour obtained with the bi-cubic jet scheme is
medium thick and blue, and the contour obtained with the bi-quintic jet
scheme is thick and red. The true solution (approximated with high
accuracy using Lagrangian markers), is shown as a gray patch. For the
considered resolution, both jet schemes yield more accurate results than
WENO5. In fact, the fifth order jet scheme yields an almost flawless
approximation to the true solution, on the scale of interest. As alluded
to in \S~\ref{subsec:numerics_stability}, the subgrid resolution of the
jet schemes is of great benefit in resolving the thin elongated structure.

\subsection{Computational Cost and Efficiency} \label{subsec:numerics_cost}
In order to get an impression of the relative computational cost and efficiency of the presented schemes, we measure the CPU times that various versions of jet schemes and WENO require to perform given tasks. We use the same test as in \S~\ref{subsec:numerics_accuracy}, and apply the following versions of jet schemes: (a) jet schemes of orders 1, 3, and 5, that are based on the partial $k$-jet; and (b) jet schemes of orders 3 and 5, that are based on the total $k$-jet and use grid-based finite difference approximations for the missing derivatives of the partial $k$-jet (see \S~\ref{subsec:numerics_stability} for a description of these schemes). In addition, we consider the following versions of WENO and finite difference schemes to serve as reference methods: simple linear 2-D upwinding with forward Euler in time (with $\Delta t = \frac{1}{\sqrt{2}}h\/$), WENO3 \cite{JiangShu1996} advanced with the third order Shu-Osher method \cite{ShuOsher1988}, and WENO5 with the Cash-Karp method \cite{CashKarp1990} used for the time stepping. Note that unlike the case where convergence is tested (see \S~\ref{subsec:numerics_accuracy}), for a fair comparison of CPU times a true fifth order time stepping scheme must be used. While the Cash-Karp method is not SSP, for the considered test case no visible oscillations are observed. Furthermore, in order to give a fair treatment to WENO, one must be careful with the choice of the parameter $\varepsilon$ that WENO schemes require \cite{JiangShu1996}. The meaning of the parameter is that for $\varepsilon\ll h^2$, spurious oscillations are minimized at the expense of a degradation of convergence rate, while for $\varepsilon\gg h^2$, the schemes reduce to (unlimited) linear finite differences of the respective order. We therefore consider two extremal cases for both WENO3 and WENO5: one with a tiny $\varepsilon = 10^{-10}$, and one that is linear finite differences (called ``linear FD3'' and ``linear FD5'') with upwinding (with all limiting routines removed). Practical WENO implementations will be somewhere in between these two extremal cases.

\begin{table}
\begin{tabular}{|c|l|c|c|}
\hline
Order & Approach                         & $L^{\infty}$ error   & CPU time/sec. \\
\hline \hline
1 & bi-linear jet scheme                 & $1.69\times 10^{-1}$ & \phantom{00}0.400 \\
1 & upwind                               & $1.92\times 10^{-1}$ & \phantom{00}0.456 \\
\hline
3 & bi-cubic jet scheme                  & $1.35\times 10^{-4}$ & \phantom{00}7.91\phantom{0} \\
3 & bi-cubic jet scheme grid-based FD    & $2.31\times 10^{-4}$ & \phantom{00}5.99\phantom{0} \\
3 & WENO3/RK3 ($\varepsilon=10^{-10}$)   & $1.21\times 10^{-2}$ & \phantom{00}1.61\phantom{0} \\
3 & linear FD3/RK3                       & $1.54\times 10^{-3}$ & \phantom{00}1.15\phantom{0} \\
\hline
5 & bi-quintic jet scheme                & $8.23\times 10^{-8}$ & 103\phantom{.000} \\
5 & bi-quintic jet scheme grid-based FD  & $1.32\times 10^{-7}$ & \phantom{0}36.3\phantom{00} \\
5 & WENO5/RK5 ($\varepsilon=10^{-10}$)   & $1.25\times 10^{-4}$ & \phantom{0}19.3\phantom{00} \\
5 & linear FD5/RK5                       & $2.15\times 10^{-5}$ & \phantom{0}19.1\phantom{00} \\
\hline
\end{tabular}
\vspace{.7em}
\caption{CPU times for a fixed resolution $h = \sfrac{1}{150}$ with various jet schemes and WENO schemes.}
\label{table:cpu_times}
\vspace{-1em}
\end{table}

As a first comparison of the computational costs of these numerical approaches, we run all of them for a fixed resolution ($h = \sfrac{1}{150}\/$), and record the resulting $L^{\infty}$ errors and CPU times, as shown in Table~\ref{table:cpu_times}. One can see a clear separation between methods of different orders both in terms of accuracy and in terms of CPU times. When comparing approaches of the same order one can observe that for identical resolutions, jet schemes incur a significantly larger computational effort than WENO and linear finite difference schemes. However, this increased cost comes at the benefit of a significant increase in accuracy. Similarly, it is apparent that the use of grid-based finite difference approximations for parts of the $k$-jet can reduce the cost of jet schemes significantly. However, this tends to come at an expense in accuracy, in addition to the fact that the resulting scheme could become unstable, as demonstrated in \S~\ref{subsec:numerics_stability}. In fact, the CPU times for the bi-quintic jet scheme with grid-based finite differences shown in Table~\ref{table:cpu_times} must be interpreted as a guideline only, since without further stabilization this version of the method is of no practical use.

\begin{table}
\begin{tabular}{|c|l|c|c|c|}
\hline
Order & Approach & Res.~$h$ & $L^{\infty}$ error & time/sec. \\
\hline \hline
1 & bi-linear jet scheme
  & $\sfrac{1}{1600}$ & $1.89\times 10^{-2}$ ${}^{(*)\!\!}$ & 441 ${}^{(*)\!\!\!\!}$ \\
1 & upwind
  & $\sfrac{1}{1600}$ & $2.28\times 10^{-2}$ ${}^{(*)\!\!}$ & 482 ${}^{(*)\!\!\!\!}$ \\
\hline
3 & bi-cubic jet scheme
  & $\sfrac{1}{167}$  & $9.86\times 10^{-5}$ & \phantom{00}11.4\phantom{00} \\
3 & bi-cubic jet scheme grid-based FD
  & $\sfrac{1}{200}$  & $9.86\times 10^{-5}$ & \phantom{00}13.0\phantom{00} \\
3 & WENO3/RK3 ($\varepsilon=10^{-10}$)
  & $\sfrac{1}{1350}$ & $9.61\times 10^{-5}$ & 1110\phantom{.000} \\
3 & linear FD3/RK3
  & $\sfrac{1}{380}$  & $9.65\times 10^{-5}$ & \phantom{00}18.7\phantom{00} \\
\hline
5 & bi-quintic jet scheme
  & $\sfrac{1}{35}$   & $9.76\times 10^{-5}$ & \phantom{000}1.55\phantom{0} \\
5 & bi-quintic jet scheme grid-based FD
  & $\sfrac{1}{40}$   & $8.62\times 10^{-5}$ & \phantom{000}0.821 \\
5 & WENO5/RK5 ($\varepsilon=10^{-10}$)
  & $\sfrac{1}{158}$  & $9.71\times 10^{-5}$ & \phantom{00}21.4\phantom{00} \\
5 & linear FD5/RK5
  & $\sfrac{1}{110}$  & $9.87\times 10^{-5}$ & \phantom{00}11.7\phantom{00} \\
\hline
\end{tabular}
\vspace{.7em}
\caption{CPU times required to achieve an $L^{\infty}$ error of less than $10^{-4}$.
Note the lower accuracy of the first order schemes.}
\label{table:efficiency}
\vspace{-1em}
\end{table}

As a second test we compare the schemes in terms of their actual efficiency. To that end, we measure the CPU times that the schemes require to achieve a given accuracy, i.e.\ for each scheme we choose the coarsest resolution such that the $L^\infty\/$ error is below $10^{-4}$, and for this resolution we report the required CPU time. The results of this test are shown in Table~\ref{table:efficiency}. Note that the first order schemes were computed at a maximum resolution of $h = \sfrac{1}{1600}\/$. A simple extrapolation reveals that in order to reach an accuracy of $10^{-4}$, CPU times of more than 50 years would be required. In contrast, the third and fifth order methods yield reasonable CPU times, except for WENO3 with $\varepsilon=10^{-10}$, which due to the degraded convergence rate requires an unreasonably fine resolution to reach the target accuracy. In fact, for this very smooth test case, linear finite differences present themselves as rather efficient methods, which achieve descent accuracies at a very low cost per time step. However, the results show that jet schemes are actually more efficient than their WENO/finite difference counterparts. Their higher cost per time step is more than outweighed by the significant increase in accuracy. Specifically, third order jet schemes are observed to be more efficient than finite differences by a factor of 1.5, and fifth order jet schemes by a factor of at least 7.5.

The computational efficiency of jet schemes goes in addition to their structural advantages, such as optimal locality and subgrid resolution. A more detailed efficiency comparison of jet schemes with WENO, and also with Discontinuous Galerkin approaches, is given in \cite{ChidyagwaiNaveRosalesSeibold2011}.


\section{Conclusions and Outlook}
\label{sec:conclusions_outlook}
The jet schemes introduced in this paper form a new class of numerical
methods for the linear advection equation. They arise from a conceptually
straightforward formalism of advect--and--project in function spaces, and
provide a systematic methodology for the construction of high
(arbitrary) order numerical schemes. They are based on two ingredients: an
ODE solver to track characteristic curves, and Hermite interpolation.
Unlike finite volume methods or discontinuous Galerkin approaches, jet
schemes have (currently) a limited area of application: linear advection
equations that are approximated on rectangular grids. However, under these
circumstances, they have several interesting advantages: a natural
treatment of the flow nature of the equation and of boundary conditions,
no CFL restrictions, no requirement for SSP ODE solvers, and optimal
locality, i.e.\ the update rule for the data on a grid point uses
information in only a single grid cell.

Jet schemes achieve high order by carrying a portion of the jet of the
solution as data. To derive an update rule in time for the data, the notion
of superconsistency is introduced. This concept provides a way 
to systematically inherit update procedures for the derivatives from an
approximation scheme (ODE solver) for the characteristic curves. Various
alternative approaches are
presented for how to track different portions of the jet of the solution,
and for how to specifically implement superconsistent schemes.
One example is the grid-based reconstruction of higher derivatives, which
on the one hand can dramatically reduce the computational cost, but on the
other hand could destabilize the scheme. Another example is the idea of
$\varepsilon$-finite differences, which requires diligent considerations
of round-off errors, but can lead to simpler implementations and also
reduce the computational cost.

The formal equivalence of superconsistent schemes to advect--and--project
approaches in function spaces gives rise to a notion of stability: the
projection step (which is based on a piece-wise application of Hermite
interpolation) minimizes a stability functional, while the increase by the
advection step can be appropriately bounded.
The stability functional gives control over certain derivatives of the
numerical solution, and thus bounds the occurrence of oscillations. In
this paper, a full proof of stability for the 1-D constant coefficient
case is provided.

Investigations of the numerical error convergence and performance of
high order jet schemes show that they tend to be more costly, but also
strikingly more accurate than classical WENO schemes of the same
respective orders. It is observed that, for the same resolution, a jet
scheme incurs 2--7 times the computational cost of a WENO scheme.
However, the numerical experiments also show that a jet scheme achieves
the same accuracy as a WENO scheme (of the same order) at a 3--4 times
coarser grid resolution. Efficiency comparisons reveal that jet schemes
tend to achieve the same accuracy as an efficient WENO scheme at
a 1.5--7 times smaller computational cost. In comparison with WENO
schemes with a sub-optimal parameter choice, the efficiency gains of
jet schemes are factors of 14--85.
An application to a model of the passive advection of contours
demonstrates the practical accuracy of jet schemes. A particular
feature of jet schemes is their potential to represent structures that
are smaller than the grid resolution.

This paper should be seen as a first step into the class of jet schemes.
Their general interpretation as advect--and--project approaches, their
conceptual simplicity, and their apparent accuracy make them promising
candidates for numerical methods for certain important types of advection
problems. However, many questions remain to be answered, and many aspects
remain to be investigated, before one could consider them generally
``competitive''.
The listing below gives a few examples of some of the aspects where further
research and development is still needed.

\textbf{Analysis.}
A general proof of stability in arbitrary space dimensions, and for
variable coefficients, is one important goal for jet schemes. A
fundamental understanding of the stability of these methods is
particularly important when combined with grid-based finite difference
approximations to higher derivatives.

\textbf{Efficiency.}
Some ideas presented here (e.g.\ grid-based or $\varepsilon$-finite
differences) lead to significant reductions in implementation effort and
computational cost, but they also indicate that the stability of such new
approaches is not automatically guaranteed. One can safely expect that
there are many other ways to implement jet schemes more easily and more
efficiently. The investigation and analysis of such ideas is an important
target for future research.

\textbf{Applicability.}
The field of linear advection problems on rectangular grids is not as
limited as it may seem. Many examples of interface tracking fall into
this area. Of particular interest is the combination of jet schemes with
adaptive mesh refinement techniques. The optimal locality of jet schemes
promises a straightforward generalization to adaptive meshes.

\textbf{Generality.}
Since all that jet schemes need are a method for tracking the
characteristics, and an interpolation procedure, they are not fundamentally
limited to rectangular grids. Similarly, they could be generalized to
allow for non-conforming boundaries, and thus apply to general
unstructured domains. Furthermore, it is plausible that the idea of
tracking derivatives can be advantageous for nonlinear Hamilton-Jacobi
equations, hyperbolic conservation laws, or diffusion problems as well.
In all these cases new challenges arise, such as the occurrence of shocks
or the absence of characteristics.

\section*{Acknowledgments}
%
The authors would like to acknowledge support by the National Science
Foundation through grant DMS--0813648. In addition,
R. R. Rosales and B. Seibold wish to acknowledge partial support by the
National Science Foundation through through grants DMS--1007967 and
DMS--1007899, as well as grants DMS--1115269 and DMS--1115278, respectively.
Further, J.-C. Nave wishes to acknowledge partial support by the NSERC
Discovery Program.



\bibliographystyle{plain}
\bibliography{references_complete}

\end{document}